\definecolor{bleu}{rgb}{0.00,0.4,0.90}
\long\def\unmarkedfootnote#1{{\long\def\@makefntext##1{##1}\footnotetext{#1}}}
\newtheorem{definition}{Definition}[section]
\newtheorem{lemma}[definition]{Lemma}
\newtheorem{theorem}[definition]{Theorem}
\newtheorem{proposition}[definition]{Proposition}
\newtheorem{corollary}[definition]{Corollary}
\newtheorem{remark}[definition]{Remark}
\newtheorem{example}[definition]{Example}
\newcommand{\Om}{\Omega}
\newcommand{\R}{\mathbb R}
\newcommand{\dive}{\mathrm{div}\;}
\newcommand{\de}{\partial}
\newcommand{\RN}{{\mathbb R}^{N}}
\newcommand{\Si}{\sum_{i=1}^N}
\newcommand{\dei}{\partial_{x_i}}
\newenvironment{proof}[1][Proof]{\textbf{#1.} }{\ \rule{0.5em}{0.5em}}
\begin{document}

\title{Regularity results for local solutions \\ to some anisotropic elliptic equations}

\author{ G. di
Blasio\thanks{Dipartimento di Matematica e Fisica,
Universit\`{a} degli Studi della Campania \textquotedblleft L. Vanvitelli\textquotedblright , Viale Lincoln, 5 - 81100
Caserta, Italy. E--mail: giuseppina.diblasio@unicampania.it} -- F.
Feo\thanks{Dipartimento di Ingegneria, Universit\`{a} degli Studi di
Napoli \textquotedblleft Parthenope\textquotedblright, Centro
Direzionale Isola C4, 80143 Napoli, Italy. E--mail:
filomena.feo@uniparthenope.it}
-- G. Zecca \thanks{
Dipartimento di Matematica e Applicazioni \textquotedblleft R. Caccioppoli\textquotedblright,
 Universit\`{a} degli Studi di Napoli Federico II, Complesso Universitario di Monte S. Angelo, Via Cintia,  80126, Napoli, Italia. E--mail: g.zecca@unina.it }}

\date{}

\maketitle
\begin{abstract}

In this paper we study the higher integrability of local solutions for a class of anisotropic equations with lower order terms whose growth coefficients lay in Marcinkiewicz spaces.
A condition for the boundedness of such solutions is also given.

\end{abstract}

\bigskip

\noindent\textit{Mathematics Subject Classifications: 35J60, 35B65 }

\noindent\textit{Key words: Anisotropic nonlinear equations, higher integrability, boundedness of solutions}


\bigskip

\numberwithin{equation}{section}

\section{Introduction}

Our aim is to obtain regularity results for the following class of anisotropic elliptic equations
\begin{equation}\label{diffusion}
 \sum_{i=1}^N \de_{x_i}  (\mathcal{A}_i(x,\nabla u)+\mathcal B_i(x,u)) =    \sum_{i=1}^N {\de_{x_i}} ( |\mathcal F_i|^{p_i-2} \mathcal F_i) \qquad \mbox { in } \Omega,
\end{equation}
where $\Omega $  is a domain of $ \RN$, $N>2$, $p_i>1$ for every $i=1,...,N$ with
$1< \bar p<N $, denoting by $\overline{p}$ the harmonic mean of
$p_1,\cdots,p_N$, \textit{i.e.}
\begin{equation}\label{p bar}
\frac{1}{\overline{p}}=\frac{1}{N}\sum_{i=1}^N\frac{1}{p_i}.
\end{equation}

Throughout this paper, we make the following assumptions for any $i=1,...,N$
\bigskip

\noindent
$ (\mathcal H 1)\quad \mathcal A_i: \Om \times \mathbb R^N \rightarrow \R $ is a Carath\'eodory function
that satisfies
\begin{eqnarray}
\label{ii1}&&| \mathcal A_i (x, \xi ) | \leqslant \beta_i  | \xi_i|^{p_i-1}\\
\label{ii2}&&  \alpha \sum_{i=1}^N |\xi_i|^{p_i}\leqslant \sum_{i=1}^N  ( \mathcal A_i(x, \xi )\, \xi_i)
\end{eqnarray}

\noindent for a.e. $x \in \Om$ and for any vector $\xi$  in $\mathbb R^N$, where $0<\alpha\leqslant \beta_i$ are constants;
\bigskip

\noindent
$(\mathcal H 2) \quad \mathcal B_i:\Om \times \mathbb R \rightarrow \mathbb R $ is a Carath\'eodory  function such that
\begin{equation}\label{b1}
|\mathcal B_i(x,s)| \leqslant b_i(x)|s|^{ \frac{\bar{p} }{p_i'}}
\end{equation}
for a.e. $x\in \Om$ and for every $s\in \mathbb R$, where $b_i$ is
 a non negative function in the  {Marcinkiewicz space} $ L^{\frac{N\,p'_i} {\bar p},\infty}(\Om)$, {{where $p'=\frac{p}{p-1}$}};

\bigskip

\noindent
$(\mathcal H 3)\quad \mathcal F_i:\Om\to \R$  is such that $ \mathcal F_i\in L_{loc}^{ p_i}(\Omega )$.

\bigskip

%

Our model operator is
\begin{equation*}\label{operatorL}
L (u) = \sum_{i=1}^N \de_{x_i} \left(
|\de_{x_i}  u|^{p_i-2}\de_{x_i}  u
+ b_i(x)|u|^{\frac{\bar{p}}{p'_i} -1}u
\right)
,
\end{equation*}
with $b_i$ as in $(\mathcal H 2)$. The anisotropy of  $L$  is due to different power growths with respect to the partial derivatives of the unknown $u$
and it coincides with  the  so-called pseudo-Laplacian operator when  $b_i=0$ and $p_i=p$ for $i=1,....,N $.

Let us point out that term anisotropy is used in various scientific disciplines and could have a different meaning when it is related to equations as well.  The interest in anisotropic problems has deeply increased in the last years and many results in different directions have been obtained.  We quote a list of references that is obviously not exhaustive and we refer the reader to references therein to extend it: \cite{AdBF1,AdBF2,AC,BK,BGM,BB,BaCri,C1,C2,CMM,Dic1,Fragala,FGL,Mosc,LI}.

Several regularity results depending on the summability of datum are well-known in
literature for isotropic counterpart of \eqref{diffusion}
\begin{equation*}\label{diffusion2}
 \dive \left(|\nabla u|^{p-2} \nabla u +b(x)|u|^{p-2}u\right) = \dive    ( |\mathcal F|^{p-2} \mathcal F) \qquad \mbox { in } \Omega,
\end{equation*}
where $p>1$, $b$ and $\mathcal F$ are vector fields with suitable summability.
In the isotropic case for linear equations, assuming the coefficient of the lower order term in suitable Lebesgue spaces, Stampacchia in \cite{S} proves that if the datum $\mathcal F$ belongs to $(L^r(\Omega))^N$ with $2<r<N$, then the solution $u$ belongs to $L^{r^*}(\Omega)$. Otherwise when $r>N$ it follows that the solution $u$ is bounded. Similar results have been obtained also for isotropic nonlinear operators taking the coefficient of the lower order term $b$ in Lebesgue spaces in \cite{B}
and in the Marcinkiewicz spaces in \cite{Bocc1}, \cite{FGMZ}, \cite{GGM} and \cite{GMZ}. In this paper we prove such kind of regularity results for anisotropic equation \eqref{diffusion} dealing with local solution, whose definition is recalled in what follows.

\begin{definition}
If $\mathcal F_i\in L_{loc}^{ p_i}(\Omega )$ for $i=1,..,N$, we say that
 $u\in W^{1,\vec {p}}_{loc} (\Omega )$ is a local solution to  \eqref{diffusion} provided
 \begin{equation}\label{sol1}
\Si \int_\Om   \left (\mathcal{A}_i(x,\nabla u) +\mathcal B_i(x,u) \right) \dei \varphi \, dx = \Si \int_\Om  ( \mathcal |F_i|^{p_i-2} \mathcal F_i)\, \dei \varphi \,dx
\end{equation}
$\forall \varphi\in C^\infty_0(\Om)$.
\end{definition}

For the definition  of the  anisotropic Sobolev space $W^{1,\vec {p}}_{loc} (\Omega )$  we refer to Section \ref{anisotropicSec}.

 To give an idea  of our result,  let us consider, for simplicity, equation  \eqref{diffusion} without the lower order terms, \emph{i.e.} $\mathcal B_i\equiv0$. When  the datum $\mathcal{F}_i\in L_{loc}^{p_i}(\Omega)$ for $i=1,..,N$ with $\bar p<N$, a local solution $u$ belongs to $L_{loc}^{\overline{p}^*}(\Omega)$, where
\begin{equation}\label{p star}
\bar{p}^*=\frac{N\bar{p}}{N-\bar p},
\end{equation}
as suggested by the anisotropic imbedding (see Section 4). Otherwise if $\mathcal{F}_i\in L_{loc}^{r_i}(\Omega)$ with $r_i>p_i$ for $i=1,..,N$, we expect that the summability of $u$ improves. In order to analyze the higher summability of $u$ we consider the following minimum
\begin{equation}\label{mu}
\mu=\min_{i}\left\{ \frac{r_i}{p_i}\right\},
\end{equation}
first introduced in \cite{BMS},  and we are able to prove that $u\in L_{loc}^{(\overline{p}\mu)^*}(\Omega)$ if $\bar p\mu<N$. Then the regularity of $u$ depends on $\mu$.

These regularity results are stated in Theorem \ref{mainTHM} taking into account the lower order terms under the assumption that coefficients $b_i$  have a suitable distance to $L^\infty$ sufficiently small for $i=1,\cdots,N$.

The principal difficulties  are due to the anisotropy, to the managing of local solutions and to the presence of the lower order terms. In our proof, one of the key tool is a new anisotropic Sobolev inequality in Lorentz spaces that involves the product of different powers of two functions (see Proposition \ref{Prop dis}). This inequality naturally appears in the anisotropic framework, it is of independent interest and gives an estimate in terms of the norm of the geometric mean of the partial derivatives instead of the geometric mean of the norms of the partial derivatives as usual in literature (see \eqref{Ex1}).

 Moreover in Theorem \ref{Th2} we give a sufficient condition in terms of $\mu$ for the boundedness of the solutions to \eqref{diffusion}.

We also make comments on the regularity of weak solutions of  Dirichlet problems in a bounded open set $\Omega \subset \mathbb{R}^N$ with Lipschitz boundary. In this case when $\mathcal{F}_i\in L^{p_i}(\Omega)$ for $i=1,..,N$ with $\bar p<N$ a weak solution $u$ belongs to  $L^{p_{\infty}}(\Omega)$, where $p_{\infty}=\max\{\overline{p}^*,p_{\max}\}$ with $p_{\max}=\max\{p_1,\cdots,p_N\}$ and $\bar p$ defined as in \eqref{p star}. It is evident that the regularity of $u$ depends on how much the anisotropy is concentrated, so the situation is more diversified than the isotropic case.  Otherwise if $\mathcal{F}_i\in L^{r_i}(\Omega)$ with $r_i>p_i$ for $i=1,..,N$ and $\mu\bar p<N$, we get $u\in L^s(\Omega)$ with $s=\max\{(\overline{p}\mu)^*,\mu p_{\max}\}$ and the regularity again depends on how much $p_i$ are spread out and on $\mu$.

\medskip

The paper is organized as follows. The main results are stated in Section 2. In Section 3 we recall same properties of Lorentz spaces and in Section 4 we introduce the anisotropic Sobolev spaces and the related inequalities. The proofs of  main results (Theorem \ref{mainTHM} and Theorem \ref{Th2}) are given in Section 5 and 6. We conclude the paper with a technical lemma contained in the  Appendix.

\section{Main results}

The first  result of this paper states the regularity of local solutions to \eqref{diffusion} in terms of  the summability of $\mathcal F_i$, replacing the usual smallness assumption on the norm of the coefficients of the lower order terms with a weaker one given in terms of the distance of a function $f\in L^{p,\infty}(\Om)$ to $  L^\infty(\Om)$, denoted by ${\rm{dist}}_{L^{p,\infty} (\Om) }(f,L^\infty(\Om))$ and defined by \eqref{dist_infty} in Section 3.

\begin{theorem}\label{mainTHM}
Assume that  \eqref{ii1}-\eqref{b1} are fulfilled. Let $1<\bar p<N$, $\bar p^*> p_{\max}$ and let $r_1,\cdots,r_N$ be such that
\begin{equation}\label{hp-ri}
1<\mu< \frac{N}{\bar p},
\end{equation}
 where $\mu$ is defined in \eqref{mu}.
 There exists a positive constant  $d=d(\vec r,N, \alpha, \vec{p})$  such that if
\begin{equation}\label{hpdistmain}
\max_i\left\{{ \emph{dist}}_{L^{\frac{N p'_i }{\bar p},\infty}(\Om)} (b_i, L^\infty(\Om))\right\} <d
\end{equation}
and  $u\in W^{1,\vec p}_{loc}(\Om)$
is a local solution to \eqref{diffusion} with $\mathcal F_i\in L_{loc}^{r_i} (\Om)$ for $i=1,..,N$, then
$|u|^{\frac{N\bar p(\mu-1)}{p_i(N-\bar p\mu)}+1}\in W^{1,\vec {p}}_{loc} (\Omega )$.
In particular $u\in L^{s}_{loc}(\Om)$, where
\begin{equation}\label{s}
s=(\mu\bar p)^*= \frac{N\mu \bar p}{N-\mu\bar p}.
\end{equation}
\end{theorem}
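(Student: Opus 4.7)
The plan is to combine a weighted-energy test-function argument with the new Lorentz-form anisotropic Sobolev inequality of Proposition \ref{Prop dis}. Setting $\theta = \frac{N\bar p(\mu-1)}{N-\bar p\mu}$, the target regularity is exactly $\int_K |u|^\theta |\partial_{x_i} u|^{p_i}\,dx < \infty$ for each compact $K \subset \Omega$ and each $i$, which via the easy identity $(1+\theta/\bar p)\bar p^* = s$ is precisely the statement $u \in L^s_{loc}(\Omega)$ together with $|u|^{\kappa_i} \in W^{1,\vec p}_{loc}(\Omega)$ for $\kappa_i = 1 + \theta/p_i$.

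First, I test \eqref{sol1} with $\varphi = T_k(u)\,|T_k(u)|^{\theta}\,\eta^{p_{\max}}$, where $\eta\in C^\infty_0(\Omega)$ is a standard cutoff and $T_k$ denotes truncation at height $k$. Expanding $\partial_{x_i}\varphi$ and using \eqref{ii2}, the principal part of the LHS dominates $\alpha(\theta+1)\sum_i \int |T_k(u)|^\theta|\partial_{x_i} T_k(u)|^{p_i}\eta^{p_{\max}}\,dx$, modulo a cutoff remainder which Young's inequality converts into an $\int|T_k(u)|^{\theta+p_i}|\nabla\eta|^{p_i}\,dx$ term (controllable since $\theta + p_{\max} \leq s$, which is a consequence of the assumption $\bar p^* > p_{\max}$).

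Second, I handle the RHS term by term. For the datum, Young's inequality with exponents $(p_i,p_i')$ splits $\int|\mathcal F_i|^{p_i-1}|\partial_{x_i}\varphi|\,dx$ into an absorbable weighted-gradient piece and $\int|\mathcal F_i|^{p_i}|T_k(u)|^{\theta}\,dx$; Hölder with $(r_i/p_i,(r_i/p_i)')$, combined with $\theta(r_i/p_i)' \leq s$ (equivalent to $\mu \leq r_i/p_i$), controls the latter by $\|\mathcal F_i\|_{L^{r_i}(\mathrm{supp}\,\eta)}^{p_i}\|T_k u\|_{L^s(\mathrm{supp}\,\eta)}^{\theta}$. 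For the lower order term I use \eqref{hpdistmain} to write $b_i = b_i^{(\infty)} + b_i^*$ with $\|b_i^*\|_{L^{Np_i'/\bar p,\infty}} < 2d$ and $b_i^{(\infty)}\in L^\infty$; Young with $(p_i,p_i')$, tuned so that the $A^{p_i}$-side equals $|T_k u|^\theta|\partial_{x_i} T_k u|^{p_i}$, produces the critical remainder $\int (b_i^*)^{p_i'}|T_k u|^{\bar p+\theta}\eta^{p_{\max}}\,dx$. Hölder in Lorentz spaces and the identity $N(\bar p+\theta)/(N-\bar p)= s$ bound this by a constant multiple of $(2d)^{p_i'}\|T_k u\|_{L^{s,\bar p+\theta}(\mathrm{supp}\,\eta)}^{\bar p+\theta}$. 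Finally I close the estimate via Proposition \ref{Prop dis} applied to $w = T_k(u)|T_k(u)|^{\theta/\bar p}$, which recovers a Lorentz norm $\|T_k u\|_{L^{s,\bar p+\theta}}$ from the Lorentz norm of the geometric mean $\prod_i|\partial_{x_i}w|^{1/N}$; the generalized Hölder inequality with the exponents $\bar p/(Np_i)$ (which sum to $1$) reduces this geometric-mean norm to a product of the directional weighted energies already controlled in Step 1. Combining everything yields, schematically, $\|T_k u\|_{L^{s,\bar p+\theta}}^{\bar p+\theta} \leq C_1 + C_2\sum_i(2d)^{p_i'}\|T_k u\|_{L^{s,\bar p+\theta}}^{\bar p+\theta} + (\text{sub-critical powers in }\|T_k u\|_{L^s})$; choosing $d$ so small that $C_2\sum_i(2d)^{p_i'}<1/2$ absorbs the critical term, the sub-critical ones are tamed by Young, and letting $k\to\infty$ finishes the proof.

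The main obstacle is this closure step: the interplay between the distance-to-$L^\infty$ assumption and the anisotropy forces the crucial estimate to live in the Lorentz space $L^{s,\bar p+\theta}$ (not $L^s$), and the only natural way to recover a norm of this kind from the directional weighted energies is through the Lorentz-valued geometric-mean inequality of Proposition \ref{Prop dis}; the standard anisotropic Troisi embedding, whose target is a product of $L^{p_i}$-norms of $\partial_{x_i}w$, would not match the Lorentz norm appearing on the right-hand side. A secondary, purely technical, subtlety is the truncation bookkeeping: since local solutions are only a priori in $W^{1,\vec p}_{loc}$, the natural test function $u|u|^\theta\eta^{p_{\max}}$ is not admissible until one already knows $u\in L^s_{loc}$, so the argument must be carried out with $T_k(u)$ and closed uniformly in $k$ before passing to the limit.
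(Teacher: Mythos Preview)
Your overall plan is sound and matches the paper in spirit: the weighted test function, the splitting of $b_i$ via \eqref{hpdistmain}, the Lorentz H\"older for the critical lower-order piece, and the closure through Proposition~\ref{Prop dis} are exactly the right ingredients. However, the sketch has two genuine gaps, both at the ``truncation bookkeeping'' stage that you yourself flag as delicate.

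\textbf{The bad term on $\{|u|>k\}$.} With your test function $\varphi=T_k(u)|T_k(u)|^\theta\eta^{p_{\max}}$, the coercive piece produced by \eqref{ii2} is
\[
\alpha(\theta+1)\sum_i\int_{\{|u|<k\}}|u|^\theta|\partial_{x_i}u|^{p_i}\eta^{p_{\max}}\,dx,
\]
because $\partial_{x_i}T_k(u)=0$ on $\{|u|\geq k\}$. But the cutoff remainder coming from $\mathcal A_i(x,\nabla u)\cdot\partial_{x_i}\eta$ lives on \emph{all} of $\Omega$; applying Young there yields an $\varepsilon$-piece $\varepsilon\sum_i\int|T_ku|^\theta|\partial_{x_i}u|^{p_i}\eta^{p_{\max}}$, whose restriction to $\{|u|\geq k\}$ is $\varepsilon k^\theta\sum_i\int_{\{|u|\geq k\}}|\partial_{x_i}u|^{p_i}\eta^{p_{\max}}$. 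This term is \emph{not} dominated by the left-hand side and has no bound uniform in $k$. The paper circumvents this by testing instead with $(u-T_k u)\varphi^q$: then the coercive piece is supported on $\{|u|>k\}$ and every Young remainder absorbs. The power weight is then recovered by multiplying the resulting level-set estimate by $k^\gamma$ and integrating in $k$ over $[0,K]$ (Fubini turns this into the weight $|T_K u|^{\gamma+1}$). This ``integrate in $k$'' device is the point you are missing.

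\textbf{The cutoff remainder is not sub-critical.} You claim the term $\sum_i\int|T_ku|^{\theta+p_i}|\partial_{x_i}\eta|^{p_i}$ is a ``sub-critical power in $\|T_ku\|_{L^s}$'' and can be absorbed via Young. But the left-hand side of your closure carries the power $\bar p+\theta$, and for any $p_i>\bar p$ one has $\theta+p_i>\bar p+\theta$: the remainder is \emph{super}-critical and cannot be absorbed. In the paper this is handled differently: after the integrate-in-$k$ step the cutoff remainder is $\sum_i\int|u|^{p_i}|T_Ku|^{\gamma+1}|\partial_{x_i}\varphi|^{p_i}\varphi^{q-p_i}$, with the $|u|^{p_i}$ and $|T_Ku|^{\gamma+1}$ factors separated. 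H\"older with exponents $\mu,\mu'$ then isolates $\|u\|_{L^{p_i\mu}}$ (a fixed, $K$-independent quantity once one knows $u\in L^{\mu p_{\max}}_{loc}$) from a factor $\|T_Ku\|_{L^s}^{\theta}$ which \emph{is} sub-critical. The a priori information $u\in L^{\mu p_{\max}}_{loc}$ is obtained by an explicit bootstrap (iterating $\mu_1=\bar p^*/p_{\max}$, $\mu_2=s(\mu_1)/p_{\max}$, \dots), which terminates in finitely many steps precisely because $\bar p^*>p_{\max}$. Your sketch neither performs nor mentions this bootstrap.

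A minor point: Proposition~\ref{Prop dis} should be applied with the anisotropic exponents $\beta_i=1+\theta/p_i$ (and $\alpha_i=q/p_i$ for the cutoff), not to the single function $w=|T_ku|^{1+\theta/\bar p}$; otherwise $\|\partial_{x_i}w\|_{L^{p_i}}^{p_i}$ carries the weight $|T_ku|^{p_i\theta/\bar p}$ rather than $|T_ku|^\theta$ and does not match the energy you control.
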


Note that if $\mu$ goes to $\left(\frac{N}{\overline{p}}\right)^+$, then $s\rightarrow\infty$ as expected.
In the isotropic case condition $\bar p^*>p_{\max}$  does not turn up and
assumption \eqref{hp-ri} reads as isotropic assumption $r<N$ considering the datum $\mathcal F=(\mathcal F_1,\cdots,\mathcal F_N) \in (L^r_{loc}(\Omega))^N$ (see \cite{GMZ} and \cite{FGMZ}).

 Some comments on the case $p_{\max}\geq \overline{p}^*$ are contained in Remark \ref{remark0},  Remark \ref{remark1} and Remark \ref{remarkFINE}, where weak solutions for Dirichlet problems are taking into account.


It is clear that in Theorem \ref{mainTHM} regularity of $u$ is related to $\mu p_1,\cdots,\mu p_N$. Indeed if $\vec {r}$ and $\vec {q}$  are two different vectors such that  $\min_{i}\left\{ \frac{r_i}{p_i}\right\}=\min_{i}\left\{ \frac{q_i}{p_i}\right\}=\mu$, then the solution $u\in L^{(\mu \bar p)^*}_{loc}(\Om)$  if either $\mathcal F_i\in L_{loc}^{r_i} (\Om)$ or $\mathcal F_i\in L_{loc}^{q_i} (\Om)$, as in the case $\mathcal{F}_i\in L^{\mu p_i}_{loc}(\Omega)$ for $i=1,..,N$.


A standard approach to treat the presence of lower order terms is to require a smallness on the norm of $b_i$, which is avoid using assumption \eqref{hpdistmain}, firstly introduced by \cite{GGM} in the isotropic case.
  We stress that the value of $d$ in  \eqref{hpdistmain} really depends on $\vec r$ (see Example 3.1 in \cite{GMZ}).

 For example if $\Omega$ is the ball centered at the origin with radius $R>0$,   we can take $b_i(x)=\gamma_i|x|^{- \frac{\bar{p} }{p_i'}}+h_i(x)$ with $\gamma_i>0$ and $h_i\in L^\infty(\Om)$. It is not difficult to see that $b_i\in L^{\frac{N\,p'_i} {\bar p},\infty}(\Omega)$ and verifies \eqref{hpdistmain} for suitable $\gamma_i$. We emphasize that condition \eqref{hpdistmain} is trivially satisfied whenever $b_i$ belongs to a Lebesgue space or to any Lorentz space contained in $ {L^{\frac{N p'_i }{\bar p},\infty}(\Om)}$, $i=1,...,N$. Then as a corollary of Theorem \ref{mainTHM} we have immediately the following result.


\begin{corollary}
Assume that \eqref{ii1}-\eqref{b1} are fulfilled. Let  $1<\bar p<N$, $\bar p^*> p_{\max}$, $1<q<\infty$,
\begin{equation*}
b_i\in L^{\frac{N p'_i }{\bar p},q}(\Om) \,\,\qquad   i=1,..,N
\end{equation*}
and let $r_1,\cdots,r_N$ be such that \eqref{hp-ri} holds.
If  $\mathcal F_i\in L_{loc}^{r_i} (\Om)$ for $i=1,..,N$,
 then any local solution   $u\in W^{1,\vec p}_{loc}(\Om)$
 to \eqref{diffusion} belongs to $L^{s}_{loc}(\Om),$
where $s$ is defined in \eqref{s}.
\end{corollary}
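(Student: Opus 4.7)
The plan is to derive this corollary as an immediate consequence of Theorem \ref{mainTHM}, simply by verifying the distance condition \eqref{hpdistmain}. Since the other hypotheses ($1<\bar p<N$, $\bar p^*>p_{\max}$, \eqref{ii1}--\eqref{b1}, \eqref{hp-ri}, and the local integrability of $\mathcal F_i$) are assumed verbatim, the only thing to check is that our stronger assumption $b_i\in L^{\frac{Np'_i}{\bar p},q}(\Omega)$ with $q<\infty$ forces the distance on the left-hand side of \eqref{hpdistmain} to be zero.

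First I would recall the standard inclusion $L^{p,q}(\Omega)\subset L^{p,\infty}(\Omega)$ for every $1\leqslant q<\infty$ (with $p=\frac{Np'_i}{\bar p}$ in our case), which is proved in Section 3 of the paper or follows from a direct comparison of the Lorentz quasinorms. In particular, each $b_i$ lies in $L^{\frac{Np'_i}{\bar p},\infty}(\Omega)$, so the distance ${\rm dist}_{L^{\frac{Np'_i}{\bar p},\infty}(\Omega)}(b_i,L^\infty(\Omega))$ is well defined in the sense of \eqref{dist_infty}.

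The key step is to show that, for $1<q<\infty$, this distance is in fact equal to $0$. The idea is to approximate $b_i$ by its truncations $T_k(b_i)=\mathrm{sign}(b_i)\min\{|b_i|,k\}\in L^\infty(\Omega)$ and to prove
\[
\|b_i-T_k(b_i)\|_{L^{\frac{Np'_i}{\bar p},\infty}(\Omega)}\longrightarrow 0\qquad\text{as }k\to\infty.
\]
This follows from the absolute continuity of the Lorentz quasinorm of an $L^{p,q}$ function with $q<\infty$: the decreasing rearrangement $(b_i-T_k(b_i))^*(t)$ equals $\max\{b_i^*(t)-k,0\}$, which decreases pointwise to $0$, and the $L^{p,\infty}$ quasinorm of $b_i-T_k(b_i)$ is dominated by the tail of the $L^{p,q}$ integral of $b_i^*$, which vanishes as $k\to\infty$ by dominated convergence. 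Consequently the distance in \eqref{hpdistmain} is $0<d$ for every admissible constant $d$.

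Once this is established, the conclusion of Theorem \ref{mainTHM}, namely $u\in L^s_{loc}(\Omega)$ with $s=(\mu\bar p)^*$, is immediate. I expect no serious obstacle: the only nontrivial ingredient is the approximation lemma for Lorentz spaces with finite second exponent, and this is a classical fact that is either stated in the preliminaries of Section 3 or can be invoked directly. The whole argument amounts to a couple of lines once this observation is in place.
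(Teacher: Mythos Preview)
Your proposal is correct and matches the paper's approach: the corollary is stated there as an immediate consequence of Theorem~\ref{mainTHM}, the point being that when $b_i\in L^{\frac{Np'_i}{\bar p},q}(\Omega)$ with $q<\infty$ the distance in \eqref{hpdistmain} vanishes. Your justification via truncations and absolute continuity of the $L^{p,q}$ quasinorm is exactly the underlying reason, which the paper leaves implicit.
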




   Without lower order terms in \cite{BMS} the authors have proved that the boundedness of a weak solution of Dirichlet problems is guaranteed under the assumption
\begin{equation}\label{hp-ri_boun}
\mu> \frac{N}{\bar p},
\end{equation}
where $\mu$ is defined in \eqref{mu}. However if $\mathcal{B}_i\not\equiv0$  for $i=1,\cdots,N$ the boundedness is not assured assuming that \eqref{hpdistmain} is in force as showed in Example 4.8 of \cite{GGM} (when $p_i=2$  for $i=1,\cdots,N$). The smallness of $\|b_i\|_{L^{\frac{Np_i'}{\bar p},\infty}}$ for $i=1,\cdots,N$ neither is sufficient to get boundedness, as the following example shows.

\begin{example}
Let us consider the following Dirichlet problem
\begin{equation*}
\left\{
\begin{array}
[c]{ll}%
 \emph{ \text{div}} (\nabla u+   bu) = \emph{\rm \text{div}}
\mathbf{\mathcal{F}} &
\hbox{ in } \Omega
\\
& \\
u=0  & \hbox{ on }\partial\Omega,
\end{array}
\right.
\label{PPbis}%
\end{equation*}
where $N>2$, $\Omega=B(0,1)$ is the unit ball of $\mathbb R^N$, $b(x)=\frac{\gamma x}{|x|^2}$, $\mathbf{\mathcal{F}}(x)=\left((2+\gamma)x_1,\cdots, (2+\gamma)x_N\right)$ and  $\gamma>0$. We stress that the solution $u(x)=|x|^{-\gamma}-|x|^2$ is unbounded even if $\|b\|_{L^{N,\infty}}=\gamma\omega_N^{1/N}$ can be small as we want taking $\gamma$ small enough.

\end{example}
In order to  obtain an $L^\infty-$regularity result we require extra summability on the  coefficients of lower order terms.
\begin{theorem}\label{Th2}
Assume that  \eqref{ii1}-\eqref{b1} holds. Let $1<\bar p<N$, $\bar p^*> p_{\max}$, $b_i\in L^{\frac{r_i}{p_i-1}}_{loc}(\Om)$
and  $\mathcal F_i\in L^{r_i}_{loc} (\Om)$ for $i=1,..,N$ with $r_1,\cdots,r_N$ satisfying \eqref{hp-ri_boun}. Then any local solution $u\in W^{1,\vec p}_{loc}(\Om)$ to equation \eqref{diffusion}
is locally bounded.
\end{theorem}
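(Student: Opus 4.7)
The approach is a Stampacchia--De Giorgi truncation on concentric balls, adapted to the anisotropic structure. Fix $B_{R_0}\Subset \Omega$ and, for $0<\rho<R\leq R_0$, let $\eta\in C^\infty_0(B_R)$ be a standard cutoff with $\eta\equiv 1$ on $B_\rho$ and $|\partial_i\eta|\leq c/(R-\rho)$. For $k>0$ set $v_k:=G_k(u)=(|u|-k)_+\,\mathrm{sgn}\,u$ and $A_{k,R}:=\{x\in B_R:|u(x)|>k\}$. First I would test \eqref{sol1} (after the usual density approximation) against $\varphi=v_k\,\eta^{p_{\max}}$; using \eqref{ii2} on the principal part, \eqref{ii1}, \eqref{b1} on the remaining terms, and Young's inequality to absorb the $|\partial_i v_k|^{p_i}\eta^{p_{\max}}$ contributions, one derives a Caccioppoli-type inequality
\begin{equation*}
\sum_{i=1}^N\int_{B_R}|\partial_i v_k|^{p_i}\eta^{p_{\max}}\,dx \;\leq\; C\sum_{i=1}^N\frac{1}{(R-\rho)^{p_i}}\int_{A_{k,R}}|v_k|^{p_i}\,dx \;+\; C\sum_{i=1}^N\int_{A_{k,R}}\bigl(b_i^{p_i'}|u|^{\bar p}+|\mathcal F_i|^{p_i}\bigr)\,dx.
\end{equation*}

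Second, I would feed this into the anisotropic Sobolev inequality of Section~\ref{anisotropicSec} applied to $v_k\eta$, which controls $\|v_k\|_{L^{\bar p^{*}}(B_\rho)}^{\bar p}$ by the geometric mean of $\|\partial_i(v_k\eta)\|_{L^{p_i}(B_R)}^{\bar p/N}$. Each term on the right-hand side of the Caccioppoli inequality is then to be estimated by H\"older on $A_{k,R}$: the source by $\|\mathcal F_i\|_{L^{r_i}(B_{R_0})}^{p_i}|A_{k,R}|^{1-p_i/r_i}$; the cutoff term by $|A_{k,R}|^{1-p_i/\bar p^{*}}\|v_k\|_{L^{\bar p^{*}}(B_R)}^{p_i}$, which is legitimate thanks to $\bar p^{*}>p_{\max}$; and the lower-order term, via the assumption $b_i\in L^{r_i/(p_i-1)}_{loc}$, by
\begin{equation*}
\int_{A_{k,R}}b_i^{p_i'}|u|^{\bar p}\,dx \;\leq\; \|b_i\|_{L^{r_i/(p_i-1)}(B_{R_0})}^{p_i'}\left(\int_{A_{k,R}}|u|^{\tfrac{\bar p\,r_i}{r_i-p_i}}\,dx\right)^{\tfrac{r_i-p_i}{r_i}}.
\end{equation*}
The exponent $\bar p\,r_i/(r_i-p_i)$ is strictly less than $\bar p^{*}$ precisely because $\mu\bar p>N$; splitting the inner integral over $\{|u|\leq k\}$ and $\{|u|>k\}$ produces a piece controlled by $k^{\bar p}|A_{k,R}|^{1-\bar p/\bar p^{*}}$ plus a piece of the form $\|v_k\|_{L^{\bar p^{*}}}^{\bar p}|A_{k,R}|^{\vartheta}$ with $\vartheta>0$, the latter being reabsorbed into the Sobolev left-hand side once $k$ is large enough that $|A_{k,R}|$ is small.

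After absorption and rearrangement I expect a two-parameter recursion of Stampacchia type,
\begin{equation*}
(h-k)^{\bar p^{*}}\,|A_{h,\rho}| \;\leq\; \int_{B_\rho}|v_k|^{\bar p^{*}}\,dx \;\leq\; \frac{C\,(1+k^{\sigma})}{(R-\rho)^{\kappa}}\,|A_{k,R}|^{1+\delta}
\end{equation*}
valid for all $h>k\geq k_0$ and $0<\rho<R\leq R_0$, with $\delta>0$ produced directly by the strict inequality $\mu\bar p>N$, since the combined H\"older exponent on the level-set measure is $\geq 1-1/\mu > 1-\bar p/N$. The technical lemma of the Appendix (of Stampacchia/De Giorgi type for level-set functions depending on two parameters) then yields a finite threshold $k_{*}$ and an inner radius $\rho_{*}\in(0,R_0)$ with $|A_{k_{*},\rho_{*}}|=0$, giving $\|u\|_{L^\infty(B_{\rho_{*}})}\leq k_{*}$; covering $\Omega$ by such balls completes the proof.

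The step I expect to be the main obstacle is the absorption of the $\mathcal B_i$-term. Unlike in Theorem~\ref{mainTHM}, Marcinkiewicz regularity of $b_i$ alone is insufficient, as the example preceding Theorem~\ref{Th2} shows; the stronger hypothesis $b_i\in L^{r_i/(p_i-1)}_{loc}$ is exactly what keeps the H\"older exponent on $|u|$ strictly below $\bar p^{*}$, so that the troublesome contribution can be reabsorbed rather than merely bounded. Checking that the resulting exponents on $|A_{k,R}|$ are simultaneously compatible in every direction $i$ (which is where $\bar p^{*}>p_{\max}$ intervenes decisively) is the delicate bookkeeping that closes the argument.
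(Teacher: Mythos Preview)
Your plan is sound in spirit but follows a genuinely different route from the paper. The paper does \emph{not} handle the lower-order term inside the Caccioppoli/De~Giorgi machinery. It proceeds in three steps: (i) first assume $\mathcal B_i\equiv 0$, derive from the test function $\varphi^q(u-T_k u)$ the Caccioppoli inequality \eqref{I_finale_L}--\eqref{STIMA LEMMA}, and run a De~Giorgi iteration on dyadic balls and levels (following Lemma~5.4 of \cite{Lady}) to get local boundedness; (ii) in the general case, move $\mathcal B_i(x,u)$ to the right-hand side and invoke Theorem~\ref{mainTHM} with exponents $r_i'<r_i$ so that the corresponding $\mu'$ sweeps $(1,N/\bar p)$, yielding $u\in L^q_{loc}$ for every $q<\infty$; (iii) this forces $|\mathcal F_i|^{p_i-2}\mathcal F_i-\mathcal B_i(x,u)\in L^{s_i/(p_i-1)}_{loc}$ for suitable $s_i$ with $\min_i s_i/p_i>N/\bar p$, so step (i) applies to the rewritten equation. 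Your direct approach is more self-contained (it does not rely on Theorem~\ref{mainTHM}), and your key observation $\bar p\,r_i/(r_i-p_i)<\bar p^{*}\Longleftrightarrow r_i/p_i>N/\bar p$ is exactly what makes the $b_i$-contribution subcritical.

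Two points in your sketch need more care. First, the ``technical lemma of the Appendix'' is not a two-parameter Stampacchia lemma: Lemma~\ref{L som} is an integrability computation for a specific anisotropic power function; the iteration lemma you need is external (e.g.\ Lemma~4.7 of \cite{Lady} or Stampacchia's original). Second, the contribution $\|v_k\|_{L^{\bar p^{*}}(B_R)}^{\bar p}|A_{k,R}|^{\vartheta}$ cannot be ``reabsorbed into the Sobolev left-hand side'' directly, since the left side only carries $\|v_k\|_{L^{\bar p^{*}}(B_\rho)}$ on the smaller ball; closing this requires either a hole-filling/Giaquinta iteration in the radius or building the absorption into the De~Giorgi recursion itself. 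The paper's detour through Theorem~\ref{mainTHM} sidesteps exactly this difficulty, at the cost of depending on the earlier result.
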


\section{Some properties of Lorentz spaces}\label{section}

In this section we recall the definitions of Lorentz spaces and their properties (see \cite{PKOS}  for more details).
There are various definitions of Lorentz spaces but all of them manage the notion of rearrangement.

Here we  assume that $\Om\subset \RN$, $N>2$ is an open set. Let $v$ be a measurable function defined in $\Om.$ The distribution function $\mu_{v}:[0, +\infty) \rightarrow [0, +\infty)$ of $v$ is defined as
\[
\mu_{v}(\lambda):=\big|\left\{ x\in {\Om}: |v(x)|> \lambda  \right\}\big| \qquad \text{for } \lambda\geqslant 0.
\]
The {decreasing rearrangement} of $v$ is the map $v^{*}: [0, +\infty) \rightarrow [0, +\infty]$ given by
\begin{equation}\label{u star}
v^{*}(s)= \sup \left\{   t\geqslant 0: \mu_v(t) \geqslant s\right\}\qquad \text{ for } s\geqslant 0.
\end{equation}
By $v^{**}$ we denote the {\em maximal function} of $v^{*}$, \emph{i.e.}
\begin{equation}\label{u star star}
v^{**}(s)= \frac{1}{s} \int_0^s v^{*}(\sigma)\, d\sigma\qquad \text{ for } s>0.
\end{equation}

For {$1\leqslant p<+\infty$} and $0<q\leqslant + \infty$ the Lorentz space $L^{p,q}({\Om})$ consists in all measurable functions $v:\Om\to \R$ such that
\begin{equation*}
\|v\|_{L^{p,q}({\Om})}=
\begin{cases}
\displaystyle{\bigg(\int_0^{|{\Om}|} \big[t^{1/p}\,v^*(t)\big]^q\,\frac{dt}t\bigg)^{1/q}}\qquad&\text{if $q<+\infty$,}\\[5mm]
\displaystyle{\sup_{t\in(0,|{\Om}|)}\big[t^{1/p}\,v^*(t)\big]}&\text{if $q=+\infty$}
\end{cases}
\end{equation*}
 is finite.  Notice that in general $\|\cdot\|_{L^{p,q}({\Om})}$ is a  quasinorm, but replacing $v^*$ with $v^{**}$ one obtains an equivalent norm and $L^{p,q}(\Omega)$ becomes a Banach space. We observe that it holds
\begin{equation*}\label{normepq}
\| \,|v|^s\|_{L^{p,q}}=\|v\|_{L^{s p,s q}}^{s } \quad \text{ for } 0<s<+\infty.
\end{equation*}

The Lorentz spaces are a refinement of Lebesgue spaces. Indeed for $p=q$,
$L^{p,p}(\Omega)$ reduces to the standard Lebesgue space $L^p(\Omega)$,  and  $L^{p,\infty}(\Om)$ is also known as Marcinkiewicz space $\mathcal M^p({\Om})$, or weak-$L^p({\Om})$. If $\Om$ is bounded, such inclusions follow
\[
L^r (\Om) \subset   L^{p,q}(\Om)\subset L^{p,p}(\Om)\equiv L^p(\Om)\subset  L^{p,r} (\Om) \subset L^{p,\infty}(\Om)\subset  L^q(\Om)
\]
for $1<q<p<r\leqslant +\infty,$ with continuous injections. We observe that $L^{p,\infty}(\Omega) \supset L^p(\Omega)$. For example, if $\Omega \subset \mathbb R^N$ contains the origin, the function
$v(x)=\vert x \vert^{-\frac{N}{p}}\not\in L^p(\Omega)$ but $v\in L^{p, \infty}(\Omega)$ with $\vert \vert v \vert \vert_{L^{p,\infty}}^p=\omega_N$, where  $\omega_N$ stands for the Lebesgue measure of the unit ball of $\mathbb R^N.$

Now we introduce a suitable characterization of Lorentz spaces that is useful to generalize Sobolev embedding theorems in anisotropic framework.

Let $k>1$ fixed, for any measurable function $v$ defined in $\Omega$ such that for every $\lambda>0$
\begin{equation}\label{level_v}
\mu_v (\lambda)<+\infty,
\end{equation}
we choose the levels $a^v_n\geq 0 , n\in \mathbb{Z}$, such that

\[
|\{x\in \Omega: |v(x)|> a^v_n\}|\leq k^{-n}\leq |\{x\in \Omega: |v(x)|\geq a^v_n\}|
\]
or equivalently
\begin{equation}\label{def_an}
a^v_n\in [v^*((k^{-n})^+);v^*((k^{-n})^-)]
\end{equation}
where $v^*$ is the decreasing rearrangement of $v$ defined in \eqref{u star} and where plus and minus denote the right and the left-hand limit respectively.

Now we consider a particular sequence of functions $\omega_n(v)$  defined by
 \begin{equation}
\omega_n(v)=\left\{
\begin{array}
[c]{ll}%
0 & \hbox{ for }0\leq|v|\leq a^v_{n-1}\\
|v|-a^v_{n-1} & \hbox{ for }a^v_{n-1}<|v|\leq a^v_{n} \\
a^v_{n}-a^v_{n-1} & \hbox{ for }|v|> a^v_{n} ,
\end{array}
\right.  \label{omega n}%
\end{equation}
with the levels $a^v_n$ defined for $n\in \mathbb{Z}$ as in \eqref{def_an}. We observe that
\begin{equation}\label{dis carat}
\omega_n(v)\leq (a^v_{n}-a^v_{n-1})\chi_{\{|v|>a^v_{n-1}\}}\quad \text{and} \quad\omega_n(v)\geq (a^v_{n}-a^v_{n-1})\chi_{\{|v|\geq a^v_{n}\}},
\end{equation}
where $\chi$ is the characteristic function, so one finds for $0<r<\infty$, that
\[
k^{-\frac{n}{r}}(a^v_{n}-a^v_{n-1})\leq \left(\int_{\mathbb{R}^N}|\omega_n(v)|^r dx\right)^{\frac{1}{r}}\leq k^{-\frac{n-1}{r}}(a^v_{n}-a^v_{n-1}).
\]

\noindent Recalling that  a sequence $ \{a_n\}_{n\in \mathbb Z}$ belongs to $l^q(\mathbb Z)$ for $q>1$ iff $
\sum_{n\in \mathbb Z} |a_n|^q<+\infty$,
we are in position to state the following announced  characterization of Lorentz spaces.

\begin{proposition}\label{Th_equiv}
Let be $1<p<\infty$ and $1\leq q\leq \infty$. For any $v$ extended by $0$ outside $\Omega$ satisfying \eqref{level_v}, one has
\begin{equation}\label{equiv}
  v\in L^{p,q}(\mathbb{R}^N) \quad \text{is equivalent to} \quad k^{-\frac{n}{p}}(a^v_{n+1}-a^v_{n}) \in l^q(\mathbb Z)
\end{equation}
and in particular
\begin{equation}\label{equiv_bis}
\!\!\!\!\left\|v\right\|_{L^{p,q}(\Omega)}=\left\|v\right\|_{L^{p,q}(\mathbb{R}^N)}
\leq C_1
\left(\sum_{n \in\mathbb{Z}}\left[a^v_{n}\right]^q
k^{-\frac{nq}{p}}
\right)^{1/q} \!\!\!
\leq C_2
\left(\sum_{n \in\mathbb{Z}}\left[a^v_{n}-a^v_{n-1}\right]^q
k^{-\frac{nq}{p}}
\right)^{1/q}\!\!,
\end{equation}
with $C_1,C_2$ are positive constants independent on $v$ and $a^v_{n}$ is defined in \eqref{def_an}.
\end{proposition}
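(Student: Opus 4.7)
The plan is to start from the definition of the Lorentz quasinorm in terms of the decreasing rearrangement,
$$\|v\|_{L^{p,q}(\mathbb{R}^N)}^q=\int_0^{+\infty}\bigl[t^{1/p}v^*(t)\bigr]^q\,\frac{dt}{t}$$
(with the usual supremum in place of the integral when $q=\infty$), and to compare this quantity with the series in \eqref{equiv_bis} by partitioning $(0,+\infty)$ into the $k$-adic intervals $I_n=(k^{-(n+1)},k^{-n})$, $n\in\mathbb{Z}$. The key observation is that the definition \eqref{def_an} of the levels $a^v_n$, coupled with the monotonicity of $v^*$, produces a pointwise sandwich on each $I_n$. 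Indeed, for $t\in I_n$ one has $t<k^{-n}$, so $v^*(t)\ge v^*((k^{-n})^-)\ge a^v_n$, and $t>k^{-(n+1)}$, so $v^*(t)\le v^*((k^{-(n+1)})^+)\le a^v_{n+1}$; hence $a^v_n\le v^*(t)\le a^v_{n+1}$ on $I_n$.

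Multiplying by $t^{q/p-1}$, which on $I_n$ is comparable to $k^{-n(q/p-1)}$, and integrating over $I_n$ (whose length is $k^{-n}(1-k^{-1})$), and then summing over $n\in\mathbb{Z}$, I obtain two-sided bounds
$$c_1\sum_{n\in\mathbb{Z}}k^{-nq/p}(a^v_n)^q\le \|v\|_{L^{p,q}(\mathbb{R}^N)}^q\le c_2\sum_{n\in\mathbb{Z}}k^{-nq/p}(a^v_n)^q,$$
for positive constants $c_1,c_2$ depending only on $p,q,k$. This immediately yields both the equivalence $v\in L^{p,q}(\mathbb{R}^N)\Leftrightarrow \{k^{-n/p}a^v_n\}\in l^q(\mathbb{Z})$ and the first estimate of \eqref{equiv_bis}; the case $q=\infty$ is handled identically, replacing integrals and sums by suprema. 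Since $v$ is extended by $0$ outside $\Omega$, the $\mathbb{R}^N$-rearrangement coincides with the one on $\Omega$, which explains the equality $\|v\|_{L^{p,q}(\Omega)}=\|v\|_{L^{p,q}(\mathbb{R}^N)}$.

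It then remains to replace $a^v_n$ by the differences $a^v_n-a^v_{n-1}$. Assumption \eqref{level_v} forces $v^*(t)\to 0$ as $t\to+\infty$, hence $a^v_n\to 0$ as $n\to-\infty$, so the telescoping identity $a^v_n=\sum_{j\le n}(a^v_j-a^v_{j-1})$ is valid. Setting $\delta_j=a^v_j-a^v_{j-1}\ge 0$, one recognises
$$k^{-n/p}a^v_n=\sum_{j\le n}k^{-(n-j)/p}\bigl(k^{-j/p}\delta_j\bigr)$$
as a discrete convolution on $\mathbb{Z}$ with the kernel $K_m=k^{-m/p}\chi_{\{m\ge 0\}}$, whose $l^1(\mathbb{Z})$-norm equals $(1-k^{-1/p})^{-1}<+\infty$. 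Young's convolution inequality then gives $\|k^{-n/p}a^v_n\|_{l^q(\mathbb{Z})}\le(1-k^{-1/p})^{-1}\|k^{-n/p}\delta_n\|_{l^q(\mathbb{Z})}$, which, combined with the previous step and a harmless shift of index turning $(a^v_n-a^v_{n-1})$ into $(a^v_{n+1}-a^v_n)$, proves the second inequality of \eqref{equiv_bis} and establishes the nontrivial direction of \eqref{equiv}; the converse direction is trivial from $0\le\delta_n\le a^v_n$. The main delicacy lies precisely in this last step: one needs the vanishing of $a^v_n$ at $-\infty$ to legitimise the telescoping, which is exactly what hypothesis \eqref{level_v} provides, and one should also check that the endpoint ambiguity in the definition \eqref{def_an} is harmless, which is automatic since any admissible choice satisfies the sandwich on each $I_n$.
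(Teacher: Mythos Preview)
Your proof is correct and follows essentially the same route as the paper: a $k$-adic partition of $(0,\infty)$, the monotonicity sandwich $a^v_n\le v^*(t)\le a^v_{n+1}$ on each block, and the telescoping/Young convolution argument to pass from $a^v_n$ to the differences. The only difference is that the paper cites Tartar~\cite{Ta} for the equivalence~\eqref{equiv} and proves only the upper bound~\eqref{equiv_bis}, whereas you obtain the two-sided comparison directly, which in fact yields~\eqref{equiv} without an external reference.
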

\begin{proof}
Equivalence \eqref{equiv} is contained in \cite{Ta}, Proposition 3. We prove  \eqref{equiv_bis}. Since the rearrangement is non increasing, we have
\begin{align}\label{Lor_0}
\!\!\left\|v\right\|_{L^{p,q}}^{q}
& \!= \!\sum_{n \in\mathbb{Z}}\int_{k^{-n}}^{k^{-(n-1)}}\!\left[s^{1/p}v^*(t)\right]^{q} \frac{ds}{s} \leq \sum_{n \in\mathbb{Z}}k^{-\frac{(n-1)q}{p}}
[v^*((k^{-n})^{+})]^{q}\int_{k^{-n}}^{k^{-(n-1)}}\!\!\frac{ds}{s}
\\
&=\sum_{n \in\mathbb{Z}}[v^*((k^{-n})^{+})k^{-\frac{n-1}{p}}]^{q} \log k
\leq \log k \sum_{n \in\mathbb{Z}}( k^{-\frac{n-1}{p}}a^v_n)^{q}=k^{\frac{q}{p}}\log k \sum_{n \in\mathbb{Z}}( k^{-\frac{n}{p}}a^v_n)^{q}.\nonumber
\end{align}
 Now following the idea of Tartar \cite{Ta}, we put $b_n=a^v_n-a^v_{n-1}$. We observe that $a^v_n=\sum_{m=-\infty}^{n} b^v_m$ since the measure of the level sets is finite. We have that

\begin{equation*}
k^{-\frac{n}{p}}a^v_n= \sum_{m=-\infty}^{n} k^{\frac{m-n}{p}}(k^{-\frac{m}{p}}b^v_m)
\end{equation*}
and by the convolution Young inequality, we deduce  the following inequality
\begin{equation}\label{convolution_0}
\begin{split}
  \left(\sum_{n\in \mathbb{Z}}|k^{-\frac{n}{p}}a^v_n|^q\right)^{\frac{1}{q}}
  &
  \leq\left(\sum_{m\leq 0}k^{\frac{m}{p}}\right)
  \left(\sum_{n\in \mathbb{Z}}|k^{-\frac{n}{p}}(a^v_n-a^v_{n-1})|^q\right)^{\frac{1}{q}}\\
  &= k^{-\frac{1}{p}}\left(\sum_{m\leq 0}k^{\frac{m}{p}}\right)
  \left(\sum_{n\in \mathbb{Z}}|k^{-\frac{n}{p}}(a^v_{n+1}-a^v_{n})|^q\right)^{\frac{1}{q}}.
  \end{split}
\end{equation}
Combining \eqref{convolution_0} and \eqref{Lor_0}, inequality \eqref{equiv_bis} follows
with $C_2=(\log k)^{\frac{1}{q}}\sum_{m\leq 0}k^{\frac{m}{p}}$.
\end{proof}

\bigskip


\bigskip
We remark that $L^\infty(\Om)$ is not dense in $L^{p,\infty}(\Om)$,  $p\in{}]1, +\infty[$. We define the distance of a given  function $f\in L^{p,\infty}(\Om)$ to $  L^\infty(\Om)$ as
\begin{equation}\label{dist_infty}
{\rm{dist}}_{L^{p,\infty} (\Om) }(f,L^\infty(\Om))=\inf_{g\in L^\infty(\Om)} \|f-g\|_{L^{p,\infty}(\Om)}.
\end{equation}
Note that, since $\|~\|_{p,\infty}$ is not a norm, ${\rm{dist}}_{L^{p,\infty} (\Om)}$ is just equivalent to a metric. In \cite{CS} is proved that
\begin{equation}\label{distlim}
{\rm{dist}}_{ L^{p,\infty}(\Om)  }(f, L^\infty(\Om))=\lim_{M \to +\infty}\|f-T_M f\|_{L^{p,\infty}(\Om)},
\end{equation}
where the truncation at level $M>0$ is defined as
\begin{equation}\label{T}
T_M(y)=\frac{y}{|y|}\min\{|y|, M\}.
\end{equation}

At the end of this section we recall the following useful lemma.

\begin{lemma}\label{Lemma tetai}\emph{(}see \cite[page 43]{KPS}\emph{)} Let $X$ be a rearrangement invariant space and let $
0\leq \theta _{i}\leq 1$ for $i=1,...,M,$ such that $\sum_{i=1}^{M}\theta _{i}=1$, then
\begin{equation*}
\left\| \prod_{i=1}^{M}|f_{i}|^{\theta _{i}}\right\| _{X}\leq
\prod_{i=1}^{M}\Vert f_{i}\Vert _{X}^{\theta _{i}} \quad \forall f_i\in X.  \label{holder
X}
\end{equation*}
\end{lemma}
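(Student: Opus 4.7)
My plan is to deduce the statement from the pointwise weighted arithmetic--geometric mean inequality (Young's inequality) combined with the Banach lattice structure that every rearrangement invariant space carries. The rearrangement invariance itself is not really needed; only the lattice monotonicity $|u|\le |v|\Rightarrow \|u\|_X\le\|v\|_X$, the triangle inequality, and positive homogeneity of $\|\cdot\|_X$ will be used.

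First I would dispose of the degenerate cases. If $\|f_j\|_X=0$ for some $j$ with $\theta_j>0$, then $f_j=0$ a.e., so $\prod_i|f_i|^{\theta_i}=0$ a.e.\ and both sides vanish. The indices with $\theta_i=0$ contribute a factor $|f_i|^0\equiv 1$ on the left and $\|f_i\|_X^0=1$ on the right and can be dropped. So I may assume $\theta_i>0$ and $\|f_i\|_X>0$ for every $i$. By the positive homogeneity of $\|\cdot\|_X$, upon replacing $f_i$ with $f_i/\|f_i\|_X$, it is enough to prove the inequality in the normalized form
\[
\Bigl\|\prod_{i=1}^M|f_i|^{\theta_i}\Bigr\|_X\le 1\qquad\text{whenever }\|f_i\|_X=1\text{ for all }i.
\]

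The key pointwise step is the weighted AM--GM inequality: for a.e.\ $x\in\Omega$,
\[
\prod_{i=1}^M|f_i(x)|^{\theta_i}\le \sum_{i=1}^M\theta_i\,|f_i(x)|,
\]
which follows at once from the concavity of $\log$ (applied where all $f_i(x)\ne 0$; trivial otherwise) together with $\sum_i\theta_i=1$. Since $X$ is a Banach lattice, this pointwise domination transfers to the norm, and the triangle inequality together with the normalization gives
\[
\Bigl\|\prod_{i=1}^M|f_i|^{\theta_i}\Bigr\|_X\le \Bigl\|\sum_{i=1}^M\theta_i|f_i|\Bigr\|_X\le \sum_{i=1}^M\theta_i\|f_i\|_X=\sum_{i=1}^M\theta_i=1,
\]
which is exactly the desired bound. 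Undoing the normalization yields the stated inequality.

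There is no real obstacle: the only point requiring a little care is the handling of null or zero--norm factors, and verifying that the pointwise inequality is indeed measurable and majorized by an element of $X$ so that the lattice property of $\|\cdot\|_X$ applies, both of which are immediate once the reduction above is made. The induction on $M$ that one sometimes sees in the literature is unnecessary here, since Young's inequality is already stated in the multi--weight form.
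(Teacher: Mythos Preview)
Your argument is correct. The paper does not give its own proof of this lemma but merely cites \cite[page~43]{KPS}; your route via the pointwise weighted AM--GM inequality, the lattice monotonicity of $\|\cdot\|_X$, and the normalization $\|f_i\|_X=1$ is the standard one and exactly what that reference contains.
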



\section{Anisotropic inequalities}\label{anisotropicSec}
Let $\vec{p}= (p_1,p_2,...,p_N)$ with $p_i>1$ for $i=1,...,N$ and $\Omega$ be a bounded open set. As usual the anisotropic Sobolev space is the Banach space defined as

\[
W^{1,\vec{p}}(\Om)= \{ u\in W^{1,1} (\Om) : \de_{x_i} u\in L^{p_i} (\Om),i=1,...,N\}
\]
equipped with \[
\|u\|_{W^{1,\vec{p}}(\Om)} = \|u\|_{L^{1}(\Om)} +\sum_{i=1}^N  \|\de_{x_i} u\|_{L^{p_i}(\Om)}
\]
and
\[
W^{1,\vec{p}}_{loc}(\Om)= \{ u\in W^{1,\vec{p}}(\Om')   :  \Om'\subset\subset  \Om  \mbox{ open set}\}.
\]

It is well-known that in the anisotropic setting a Poincar\`e type
inequality holds true (see \cite{Fragala}). Indeed for every $u\in C_0^{\infty}(\Om)$ with $\Omega$ a bounded open set with Lipschitz boundary we have
\begin{equation}\label{dis poincare}
\|u\|_{L^{p_i}(\Om)} \leq C_P  \|\de_{x_i} u\|_{L^{p_i}(\Om)},\qquad \qquad i=1,...,N
\end{equation}
for a constant $C_P$ depending on the diameter of $\Omega$. Moreover, for $u\in C_0^{\infty}(\mathbb{R^N})$ the following anisotropic Sobolev inequality holds true (see\cite{Ta})
\begin{equation}\label{Sob}
\|u\|_{L^{p,q}(\mathbb{R^N})} \le S_N\prod_{i=1}^N \| \de_{x_i} u\|_{L^{p_i}(\mathbb{R^N})}^{\frac 1N},
\end{equation}
where $S_N$ is an universal constant and  $p=\bar{p}^*$
and $q=\bar{p}$ whenever $\bar{p}<N$, where $\bar p$ is defined in \eqref{p bar}.
Using the inequality between geometric and arithmetic mean we can replace the right-hand-side of \eqref{Sob} with $ \sum_{i=1}^N \| \de_{x_i} u\|_{L^{p_i} }$.

When $\overline{p}<N$ and $\Omega$ is a bounded open set with Lipschitz boundary, {the space $W_{0}^{1,\overrightarrow{p}}(\Omega)= \overline{ C_0^\infty (\Om)}^{\sum_{i=1}^N \| \de_{x_i} u\|_{L^{p_i} }}$ is continued embedding into $ L^{q}(\Omega)$ for $q\in\lbrack1,p_{\infty}]$, with
$p_{\infty}:=\max\{\overline{p}^{\ast},p_{\max}\}$ as a consequence of \eqref{Sob} and \eqref{dis poincare}}.

In the following proposition we generalize the anisotropic Sobolev inequality  \eqref{Sob} to the product of functions.
\begin{proposition}\label{Prop dis}
Let $\alpha_i,\beta_i\geq0$ (but not both identically zero) and $p_1,\cdots,p_N\geq1$ be such that $1\leq \overline{p}<N$. Then for every nonnegative functions $\phi,u\in C^\infty_0(\mathbb{R}^N)$ we have
\begin{equation}\label{Lor ineq bis zero}
\left\|\phi^{\underline{{\alpha}}}u^{\underline{\beta}}\right\|_{L^{\overline{p}^*,\overline{p}}}\leq C
\left\|\left(\prod_{i=1}^{N}\frac{\partial}{\partial x_i} (\phi^{\alpha_i}u^{\beta_i})\right)^{1/N}\right\|_{L^{\overline{p}}},
\end{equation}
where $\overline{p}^*$ is defined in \eqref{p star}
and $\underline{{\alpha}}$ and $\underline{{\beta}}$ are defined by
\begin{equation*}\label{alfa bar}
\underline{\alpha}=\frac{1}{N}\sum_{i=1}^N\alpha_i,\qquad \underline{\beta}=\frac{1}{N}\sum_{i=1}^N\beta_i
\end{equation*}
and $C$  is a positive constant independent on $u$ and $\phi$.
\end{proposition}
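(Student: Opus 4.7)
Set $v:=\phi^{\underline\alpha}u^{\underline\beta}$ and $v_i:=\phi^{\alpha_i}u^{\beta_i}$, so that $v=\prod_{i=1}^N v_i^{1/N}$. The plan combines an $L^1$ product-Sobolev inequality derived from the Loomis--Whitney inequality with the dyadic Lorentz characterization of Proposition~\ref{Th_equiv}, keeping the geometric mean of the $|\partial_{x_i}v_i|$ inside the outer $L^{\bar p}$-norm throughout.

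Each $v_i\geq 0$ is compactly supported, so by the fundamental theorem of calculus $v_i(x)\leq h_i(x^{(i)}):=\int_{\mathbb R}|\partial_{x_i}v_i|\,dx_i$, where $x^{(i)}$ denotes $x$ with the $i$-th coordinate removed. Taking geometric means and raising to the power $N/(N-1)$ yields $v(x)^{N/(N-1)}\leq\prod_{i=1}^N h_i(x^{(i)})^{1/(N-1)}$, and since each $h_i$ is independent of $x_i$, Loomis--Whitney gives the product-Sobolev estimate
\[
\|v\|_{L^{N/(N-1)}(\mathbb R^N)}\leq\prod_{i=1}^N\|\partial_{x_i}v_i\|_{L^1(\mathbb R^N)}^{1/N}.
\]
I would apply the same reasoning localized at each dyadic level of $v$ (using $\omega_n(v)\geq(a^v_n-a^v_{n-1})\chi_{\{v\geq a^v_n\}}$ and $|\{v\geq a^v_n\}|\geq k^{-n}$ from Proposition~\ref{Th_equiv}) to obtain, on each annulus $E_n:=\{a^v_{n-1}<v\leq a^v_n\}$, an integrand-level estimate of the form
\[
(a^v_n-a^v_{n-1})\,k^{-n/\bar p^*}\leq C\left(\int_{E_n}\prod_{i=1}^N|\partial_{x_i}v_i|^{\bar p/N}\,dx\right)^{\!1/\bar p}\!,
\]
with the Lorentz weight $k^{-n/\bar p^*}$ arising from $|E_n|\leq k^{-(n-1)}$ combined with the identities $1/\bar p^*=1/\bar p-1/N$ and $\sum_i 1/p_i=N/\bar p$.

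Raising to the $\bar p$-th power, summing over $n\in\mathbb Z$, and invoking Proposition~\ref{Th_equiv} then gives
\[
\|v\|_{L^{\bar p^*,\bar p}}^{\bar p}\leq C\sum_{n\in\mathbb Z}\int_{E_n}\prod_{i=1}^N|\partial_{x_i}v_i|^{\bar p/N}\,dx=C\int_{\mathbb R^N}\prod_{i=1}^N|\partial_{x_i}v_i|^{\bar p/N}\,dx=C\Bigl\|\prod_{i=1}^N(\partial_{x_i}v_i)^{1/N}\Bigr\|_{L^{\bar p}}^{\bar p},
\]
which is the target inequality (the disjointness of the $E_n$ turning the sum into a single integral over $\mathbb R^N$).

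The main obstacle is the localized estimate in the second step. Because $v=\prod v_i^{1/N}$ has level sets that do not correspond to truncations of any individual $v_i$ (since $\phi$ and $u$ can vary independently along a level set of $v$), one cannot simply restrict the Loomis--Whitney argument by truncating the $v_i$'s at matching levels. Moreover, producing the integrand-level bound $\int_{E_n}\prod|\partial_{x_i}v_i|^{\bar p/N}$ on the right, rather than the weaker $\prod_i\|\partial_{x_i}v_i\|_{L^{p_i}(E_n)}^{\bar p/N}$ (which would only reproduce the standard anisotropic Sobolev inequality~\eqref{Sob}), forces the Hölder-type manipulation to be performed at the integrand level on each annulus, in the spirit of Lemma~\ref{Lemma tetai}; the precise argument likely proceeds by an AM--GM comparison matching $\omega_n(v)$ against a geometric mean of truncations of the $v_i$, and this integrand-level accounting is exactly the refinement that distinguishes the proposition from \eqref{Sob}.
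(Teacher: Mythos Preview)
Your sketch correctly isolates the two ingredients --- the product-Sobolev inequality $\|v\|_{L^{1^*}}\leq\prod_i\|\partial_{x_i}v_i\|_{L^1}^{1/N}$ (this is exactly the lemma the paper proves first) and the dyadic Lorentz characterization of Proposition~\ref{Th_equiv} --- but there is a real gap precisely where you flag it. Localizing the Loomis--Whitney argument at the levels $E_n=\{a^v_{n-1}<v\leq a^v_n\}$ of $v$ cannot produce $\int_{E_n}\prod_i|\partial_{x_i}v_i|^{\bar p/N}\,dx$ on the right: truncating $v$ at its own levels gives you $\partial_{x_i}v$, not $\partial_{x_i}v_i$, and as you correctly observe the level sets of $v$ bear no relation to truncations of the individual $v_i$. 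The gesture toward an ``AM--GM comparison matching $\omega_n(v)$ against a geometric mean of truncations of the $v_i$'' does not bridge this; any such comparison at the integrand level would destroy the product structure you need to keep.

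The paper's resolution is to \emph{decouple the truncation across the factors}: one applies the product-Sobolev lemma to $g_i=\omega_n(v_i)+a_{n-1}^{v_i}$, where each $v_i$ is truncated at its \emph{own} dyadic levels $a_n^{v_i}$ (not at the levels of $v$). The link back to $v$ is the multiplicative relation $a_n^{v}=\prod_i(a_n^{v_i})^{1/N}$, so that the lower bound on the left-hand side reads $a_n^v\,k^{-n/1^*}$ directly. On the right one never lands on an integral over $E_n$; instead Hardy--Littlewood gives
\[
\int_{\{a_{n-1}^{v_i}<v_i\leq a_n^{v_i}\}}|\partial_{x_i}v_i|\,dx\leq\int_0^{k^{-(n-1)}}|\partial_{x_i}v_i|^*(s)\,ds=k^{-(n-1)}\,|\partial_{x_i}v_i|^{**}(k^{-(n-1)}),
\]
and after inserting the Lorentz weights and summing over $n$, monotonicity of the maximal function converts the sum into the continuous integral $\int_0^\infty\prod_i(|\partial_{x_i}v_i|^{**}(s))^{\bar p/N}\,ds$, which is then bounded by the $L^{\bar p}$-norm of the geometric mean. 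So the ``disjoint-annuli-then-sum'' mechanism you propose is replaced by a rearrangement/maximal-function passage, and the idea you are missing is to slice each $v_i$ separately rather than $v$.
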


\noindent It is clear that for example for $\beta_i=1$ the sign assumption on $u$ can be dropped.

\noindent We point out that Lemma \ref{Lemma tetai}  with $X=L^{\overline{p}}(\Omega)$ and $\theta_i=\frac{\overline{p}}{p_iN}$ yields  that
\begin{equation}\label{Lor ineq bis}
\left\|\phi^{\underline{{\alpha}}}u^{\underline{\beta}}\right\|_{L^{\overline{p}^*,\overline{p}}}\leq C \prod_{i=1}^N\left\|\frac{\partial }{\partial x_i}(\phi^{\alpha_i}u^{\beta_i})\right\|^{1/N}_{L^{p_i}}\quad \forall \phi,u\in C^\infty_0(\mathbb{R}^N)
\end{equation}
with  a positive constant $C$ independent on $u$ and $\phi$.
We emphasize that \eqref{Lor ineq bis zero} is sharper than \eqref{Lor ineq bis} as the following example shows.
\begin{example}\label{Ex1}
Let us consider $1<\bar p<N$, $\Omega=\left\{x:\overset{N}{\underset{i=1}{{\sum}}}|x_i|^{\theta_i}<R\right\},\phi\equiv1,\alpha_i\equiv1$ $$u(x)=\left(\overset{N}{\underset{i=1}{{\sum}}}|x_i|^{\theta_i}\right)^{-\gamma}
-\left(\overset{N}{\underset{i=1}{{\sum}}}R^{\theta_i}\right)^{-\gamma}$$
with $R>0,\theta_i>1$ for $i=1,\cdots,N$.
Lemma \ref{L som} in the Appendix yields $\left(\prod_{i=1}^N\frac{\partial}{\partial x_i}u\right)^{\frac{1}{N}}\in L^{\bar p} (\Om)$ taking
\begin{equation}\label{gamma esp}
 \gamma<\sum_{j=1}^N\frac{1}{\theta_j}\frac{N-\overline{p}}{\overline{p} N}
  \end{equation}
 and $\frac{\partial}{\partial x_i}u\in L^{p_i} (\Om)$  taking
\begin{equation}\label{gamma esp bis}
\gamma<\sum_{j=1}^N\frac{1}{\theta_jp_i}-\frac{1}{\theta_i}
 \end{equation}
 for $i=1,\cdots,N$. We note that \eqref{gamma esp bis} implies \eqref{gamma esp} and under latest assumption we get $u\in L^{\bar p^*}(\Om)$, but $u\not\in L^{\bar p^*+\varepsilon}(\Om)$ for every $\varepsilon>0$ (using again Lemma \ref{L som}).
\end{example}

In order to prove Proposition \ref{Prop dis} as first step we need the following result.

\begin{lemma}
For every $g_i\in C^\infty_0(\mathbb{R}^N)$, it follows that
\begin{equation}\label{Leb ineq bis}
\left\|\prod_{i=1}^Ng_i^{1/N}\right\|_{L^{1^*}}\leq \prod_{i=1}^N\left\|\frac{\partial}{\partial x_i} g_i\right\|^{1/N}_{L^{1}}.
\end{equation}
\end{lemma}

\begin{proof}
 The proof is a generalization to product of functions of the Troisi's one ( see \cite{Tr} Theorem 1.2). We have, for  $q>0$ that will be chosen later, the following inequality
\begin{equation*}
\int_{\mathbb{R}^N} \prod_{i=1}^N|g_i|^{q/N}\, dx\leq \int_{\mathbb{R}^{N}} \prod_{i=1}^N\sup_{x_i}|g_i|^{q/N}\, dx.
\end{equation*}
Since
\begin{equation*}
\int_{\mathbb{R}^{N}} \prod_{i=1}^N\sup_{x_i}|g_i|^{q/N}\, dx
\leq
\left(
\prod_{i=1}^N\int_{\mathbb{R}^{N-1}}\left[\sup_{x_i}|g_i|^{q/N}\right]
^{N-1}\, dx'
\right)
^{\frac{1}{N-1}},
\end{equation*}
where $x=(x_i,x')$ (see Lemma 4.1 of \cite{G}), we get
\begin{equation*}
\begin{split}
\!\left(\!\!\int_{\mathbb{R}^N} \prod_{i=1}^N|g_i|^{q/N}\, dx\!\right)^{\!N-1\!}\!\!\!\!&\!\!\leq \prod_{i=1}^N\int_{\mathbb{R}^{N-1}} \sup_{x_i}|g_i|^{\frac{q(N-1)}{N}}\, dx'
\leq \prod_{i=1}^N\frac{q(N-1)}{N}\int_{\mathbb{R}^{N}}|g_i|^{\frac{q(N-1)}{N}-1}\left|\frac{\partial g_i}{\partial x_i}\right|\, dx.
\end{split}
\end{equation*}
Now choosing $q$ such that $\frac{q(N-1)}{N}-1=0$, we obtain inequality \eqref{Leb ineq bis}.

\end{proof}

\medskip

\begin{proof}[Proof of Proposition \ref{Prop dis}]
Applying inequality \eqref{Leb ineq bis} to the function $$g_i=\omega_n(\phi^{\alpha_i}u^{\beta_i})+a_{n-1}^{\phi^{\alpha_i}u^{\beta_i}},$$
we obtain
\begin{equation}\label{omega_n ineq}
\left\|\left(\prod_{i=1}^N \omega_n(\phi^{\alpha_i}u^{\beta_i})+a_{n-1}^{\phi^{\alpha_i}u^{\beta_i}}\right)^{1/N}\right\|_{L^{1^*}}\leq  \prod_{i=1}^N \left\|\frac{\partial}{\partial x_i}\left[\omega_n(\phi^{\alpha_i}u^{\beta_i})+a_{n-1}^{\phi^{\alpha_i}u^{\beta_i}}\right]\right\|_{L^{1}}^{1/N},
\end{equation}
where $\omega_n$ and $a_n$ are defined as in \eqref{omega n} and \eqref{def_an}, respectively. We stress that
\begin{equation}\label{prod an}
a_n^{\phi^{\underline{\alpha}}u^{\underline{\beta}}}=\left(\prod_{i=1}^N a_n^{\phi^{\alpha_i}u^{\beta_i}}\right)^{1/N}.
\end{equation}

\noindent Moreover using the Hardy-Lyttlewood inequality, we have
\begin{equation*}
\begin{split}
&\left\|\frac{\partial}{\partial x_i}\left[\omega_n(\phi^{\alpha_i}u^{\beta_i})+a_{n-1}^{\phi^{\alpha_i}u^{\beta_i}}\right]\right\|_{L^{1}}
=
\left\|\frac{\partial}{\partial x_i}\left[\omega_n(\phi^{\alpha_i}u^{\beta_i})\right]\right\|_{L^{1}}\\
&\qquad \qquad  =
\int_{a^{\phi^{\alpha_i}u^{\beta_i}}_{n-1}< |\phi^{\alpha_i}u^{\beta_i}|\leq a^{\phi^{\alpha_i}u^{\beta_i}}_{n}}\left|\frac{\partial}{\partial x_i}(\phi^{\alpha_i}u^{\beta_i})\right|\,dx \leq \int_{0}^{k^{-(n-1)}}\left|\frac{\partial}{\partial x_i} (\phi^{\alpha_i}u^{\beta_i})\right|^*(s)\,ds.
\end{split}
\end{equation*}
Let us consider the left-hand side of \eqref{omega_n ineq}.  By \eqref{dis carat} and \eqref{prod an} we get
\begin{equation*}
\begin{split}
\left\|\left(\prod_{i=1}^N \omega_n(\phi^{\alpha_i}u^{\beta_i})+a_{n-1}^{\phi^{\alpha_i}u^{\beta_i}}\right)^{1/N}\right\|_{L^{1^*}}
&\geq
\left\|\prod_{i=1}^N
\left(a^{\phi^{\alpha_i}u^{\beta_i}}_{n}\right)^{1/N}\chi_{\{\phi^{\alpha_i}u^{\beta_i}
>a^{\phi^{\alpha_i}u^{\beta_i}}_{n}\}}
\right\|_{L^{1^*}}\\&\geq
\prod_{i=1}^N\left(a^{\phi^{\alpha_i}u^{\beta_i}}_{n}\right)^{1/N}k^{-\frac{n}{1^*}}=
a^{\phi^{\underline{\alpha}}u^{\underline{\beta}}}_{n}k^{-\frac{n}{1^*}}.\\
\end{split}
\end{equation*}

\noindent Putting all together, inequality \eqref{omega_n ineq} becomes
$$
a^{\phi^{\underline{\alpha}}u^{\underline{\beta}}}_{n}k^{-\frac{n}{1^*}}
\leq C
\prod_{i=1}^N\left(
\int_{0}^{k^{-(n-1)}}\left|\frac{\partial}{\partial x_i} (\phi^{\alpha_i}u^{\beta_i})\right|^*(s)\,ds
\right)^{1/N},$$
where $C$ denotes a positive constant that could change from line to line.

Multiplying for $k^{\frac{n}{\overline{p}'}}$, elevating to the power $\overline{p}$ and summing, we obtain
\begin{equation}\label{In1}
\begin{split}
\sum_{n \in\mathbb{Z}}\left[a^{\phi^{\underline{\alpha}}u^{\underline{\beta}}}_{n}\right]^{\overline{p}}
k^{-\frac{n\overline{p}}{\overline{p}^*}}
&
\leq
 C \sum_{n \in\mathbb{Z}}\prod_{i=1}^N\left(k^{\frac{n}{p_i'}}\int_{0}^{k^{-(n-1)}}\left|\frac{\partial}{\partial x_i} (\phi^{\alpha_i}u^{\beta_i})\right|^*(s)\,ds\right)^{\overline{p}/N}.
\end{split}
\end{equation}
On the other hand, by Proposition \ref{Th_equiv} with $p=\overline{p}^*$ and $q=\overline{p}$,  it follows that
$$\left\|\phi^{\underline{\alpha}}u^{\underline{\beta}}\right\|_{L^{\overline{p}^*,\overline{p}}}
\leq C
\left(\sum_{n \in\mathbb{Z}}\left[a_n^{\phi^{\underline{\alpha}}u^{\underline{\beta}}}\right]^{\overline{p}}
k^{-\frac{n\overline{p}}{\overline{p}^*}}
\right)^{1/\overline{p}}.
$$
We have, by the previous inequality and \eqref{In1}, that
\begin{equation*}\label{IN_2}
\left\|\phi^{\underline{\alpha}}u^{\underline{\beta}}\right\|_{L^{\overline{p}^*,\overline{p}}}
\leq C \left(\sum_{n \in\mathbb{Z}}\prod_{i=1}^N\left(k^{\frac{n}{p_i'}}
\int_{0}^{k^{-(n-1)}}\left|\frac{\partial}{\partial x_i} (\phi^{\alpha_i}u^{\beta_i})\right|^*(s)\,ds\right)^{\overline{p}/N}
\right)^{1/\overline{p}}.
\end{equation*}
We observe that by  \eqref{u star star}, since $k>1$  it follows that
\begin{equation*}
\begin{split}
\left\|\phi^{\underline{\alpha}}u^{\underline{\beta}}\right\|_{L^{\overline{p}^*,\overline{p}}}\leq &C \left(\sum_{n \in\mathbb{Z}}\prod_{i=1}^N\left(k^{\frac{n}{p_i'}}
\int_{0}^{k^{-(n-1)}}\left|\frac{\partial}{\partial x_i} (\phi^{\alpha_i}u^{\beta_i})\right|^*(s)\,ds\right)^{\overline{p}/N}
\right)^{1/\overline{p}}\\
=&\left(\sum_{n \in\mathbb{Z}}\prod_{i=1}^N\left(k^{1-\frac{n}{p_i}}
\left|\frac{\partial}{\partial x_i} (\phi^{\alpha_i}u^{\beta_i})\right|^{**}({k^{-(n-1)}})\right)^{\overline{p}/N}\right)^{1/\overline{p}}\\
=&\left(\sum_{n \in\mathbb{Z}}\frac{k^{\overline{p}}}{\log k}\prod_{i=1}^N\left(
\left|\frac{\partial}{\partial x_i} (\phi^{\alpha_i}u^{\beta_i})\right|^{**}({k^{-(n-1)}})\right)^{\overline{p}/N} \int_{k^{-n}}^{k^{-(n-1)}}\frac{ds}{s}\right)^{1/\overline{p}}\\
\leq &\left(\frac{k^{\overline{p}}}{\log k}\right)^{1/\overline{p}}\left(\sum_{n \in\mathbb{Z}}\int_{k^{-n}}^{k^{-(n-1)}}\prod_{i=1}^{N}\left(\left|\frac{\partial}{\partial x_i} (\phi^{\alpha_i}u^{\beta_i})\right|^{**}(s)\right)^{\overline{p}/N}\frac{ds}{s}\right)^{1/\overline{p}}\\
\leq &C
\left\|\left(\prod_{i=1}^{N}\frac{\partial}{\partial x_i} (\phi^{\alpha_i}u^{\beta_i})\right)^{1/N}\right\|_{L^{\overline{p}}},
\end{split}
\end{equation*}
that is \eqref{Lor ineq bis zero}.

\end{proof}

We stress that using inequality \eqref{Leb ineq bis} we can prove the following well-known anisotropic Sobolev inequalities in Lebesgue spaces. For every $u\in C^\infty_0(\mathbb{R}^N)$ there exists two positive constant $C_1,C_2$ independent of $u$ such that
$$\left\|u\right\|_{L^{\bar p^*}}\leq C_1\prod_{i=1}^N\left\|\frac{\partial}{\partial x_i} u\right\|^{1/N}_{L^{p_i}} \quad \text { when } \bar p<N$$
$$\left\|u\right\|_{L^{\bar q}}\leq  C_2\left[\left\|u\right\|_{L^{p_0}}
+\prod_{i=1}^N\left\|\frac{\partial}{\partial x_i} u\right\|^{1/N}_{L^{p_i}}\right]\quad \text { when } \bar p=N$$
for every $q\in [p_0,\infty)$ and $p_0\geq 1$. Indeed the starting point is to take $g_i=u^{\sigma_i}$ in \eqref{Leb ineq bis} with $\sum_{i=1}^N \sigma_i=Nt$ for suitable $t>0$.

\section{Proof of Theorem \ref{mainTHM}}\label{Sez proof 1}

 The proof consists in two steps. First we prove our regularity result assuming that $\|b_i\|_{L^{\frac{Np'_i}{\overline{p}},\infty}}$ are small enough for $i=1,\cdots,N$. Then,  last assumption is removed thanks \eqref{hpdistmain}.

\medskip

\textit{Step 1. Proof assuming that $\|b_i\|_{L^{\frac{Np'_i}{\overline{p}},\infty}}$ are small enough. }
Let us fix $\varphi\in C_0^1(\mathbb{R}^N), \varphi\geq0$, supported in a ball contained in $\Omega$.
Let $v=u-T_k(u)$, where $T_k$ is defined as in \eqref{T}.
for all $s \in \mathbb{R}$. We use $w=\varphi^qv$ with $q>0$ (that we will choose later) as test function in the weak formulation \eqref{sol1} and we denote by $\Omega_k=\{|u|>k\}$.
Using assumptions \eqref{ii1}, \eqref{ii2} and \eqref{b1}, we get
\begin{align}\label{I0}
\alpha \sum_{i=1}^N\int_{\Omega_k}\!\left|\frac{\partial}{\partial x_i}v\right|^{p_i}\!\!\varphi^q dx
&
\leq \sum_{i=1}^N \beta_i\!\! \int_{\Omega_k} \left|\frac{\partial}{\partial x_i}v\right|^{p_i-1}\!\!\varphi^{q-1}
\left|\frac{\partial}{\partial x_i}\varphi\right|v dx+\!\sum_{i=1}^N\int_{\Omega_k}\!\! |b_i| |u|^{\frac{\overline{p}}{p_i'}}\left|\frac{\partial}{\partial x_i}(v\varphi^q)\right| dx
\nonumber\\
&+\sum_{i=1}^N\int_{\Omega_k} |\mathcal F_i|^{p_i-1}\left|\frac{\partial}{\partial x_i}v\right|\varphi^q  dx
+\sum_{i=1}^N\int_{\Omega_k} |\mathcal F_i|^{p_i-1}v\left|\frac{\partial}{\partial x_i}\varphi\right|q\varphi^{q-1} dx \nonumber\\
&\leq \sum_{i=1}^N  \!\beta_i \!\int_{\Omega_k}\! \left|\frac{\partial}{\partial x_i}u\right|^{p_i-1}\!\!\varphi^{q-1}
\left|\frac{\partial}{\partial x_i}\varphi\right|v dx\!+C\sum_{i=1}^N\int_{\Omega_k}\!\! |b_i| k^{\frac{\overline{p}}{p_i'}}\!\left|\frac{\partial}{\partial x_i}(v\varphi^q)\right| dx \nonumber\\
&+C\sum_{i=1}^N\int_{\Omega_k} |b_i| |v|^{\frac{\overline{p}}{p_i'}}\left|\frac{\partial}{\partial x_i}v\right|\varphi^q dx
+C\sum_{i=1}^N\int_{\Omega_k} |b_i| |v|^{\frac{\overline{p}}{p_i'}}v\left|\frac{\partial}{\partial x_i}\varphi\right|q\varphi^{q-1} dx \nonumber
\\
&+\sum_{i=1}^N\int_{\Omega_k} |\mathcal F_i|^{p_i-1}\left|\frac{\partial}{\partial x_i}v\right|\varphi^q dx
+\sum_{i=1}^N\int_{\Omega_k} |\mathcal F_i|^{p_i-1}v\left|\frac{\partial}{\partial x_i}\varphi\right|q\varphi^{q-1}dx \nonumber\\
&:=I_1+I_2+I_3+I_4+I_5+I_6,
\end{align}
where $C$ is a positive constant independent of $u$. By Young inequality, we have
\begin{equation}\label {I1}
I_1
\leq
\varepsilon \sum_{i=1}^N\int_{\Omega_k} \left|\frac{\partial}{\partial x_i}v\right|^{p_i}\varphi^q
+C(\varepsilon)
\sum_{i=1}^N\int_{\Omega_k} \varphi^{q-p_i}
\left|\frac{\partial}{\partial x_i}\varphi\right|^{p_i}v^{p_i},
\end{equation}
\noindent where here and in what follows $\varepsilon$ denotes a positive constant that will be chosen later. Using Young inequality, H\"{o}lder inequality and \eqref{Lor ineq bis}, we get
\begin{equation}\label {I2}
\begin{split}
  I_3
\leq
&
\varepsilon \sum_{i=1}^N\int_{\Omega_k} \left|\frac{\partial}{\partial x_i}v\right|^{p_i}\varphi^q dx
+C(\varepsilon)
\sum_{i=1}^N\int_{\Omega_k} |b_i|^{p'_i}|v|^{\overline{p}}\varphi^{q} dx\\
\leq &
\varepsilon \sum_{i=1}^N\int_{\Omega_k} \left|\frac{\partial}{\partial x_i}v\right|^{p_i}\varphi^q dx
+C(\varepsilon)
\sum_{i=1}^N \|b_i\|^{p'_i}_{L^{\frac{Np'_i}{\overline{p}},\infty}} \|v \varphi^{\frac{q}{\overline{p}}}\|^{\overline{p}}_{L^{\overline{p}^{*},\overline{p}}}\\
\leq &
\varepsilon \sum_{i=1}^N\int_{\Omega_k} \left|\frac{\partial}{\partial x_i}v\right|^{p_i}\varphi^q dx
+C(\varepsilon)
\sum_{i=1}^N \|b_i\|^{p'_i}_{L^{\frac{Np'_i}{\overline{p}},\infty}} \left(\prod_{i=1}^N\left\| \frac{\partial}{\partial x_i}\left(v\varphi^{\frac{q}{p_i}}\right)\right\|_{L^{p_i}}^{\frac{1}{N}} \right)^{\overline{p}}
\end{split}
\end{equation}

\noindent and
\begin{equation}\label{I4}
  I_4
\leq
\varepsilon \sum_{i=1}^N\!\int_{\Omega_k} v^{p_i}\left|\frac{\partial}{\partial x_i}\!\!\varphi\right|^{p_i}\varphi^{q-p_i}\! dx
\!+\!C(\varepsilon)\sum_{i=1}^N \|b_i\|^{p'_i}_{L^{\frac{Np'_i}{\overline{p}},\infty}}\!\! \left(\prod_{i=1}^N\left\| \frac{\partial}{\partial x_i}\left(v\varphi^{\frac{q}{p_i}}\right)\right\|_{L^{p_i}}^{\frac{1}{N}} \right)^{\overline{p}}.
\end{equation}
Finally Young and H\"{o}lder inequality yields
\begin{equation}\label{I6}
\!\!\!\! \!I_5
 +I_6
\leq
\varepsilon \sum_{i=1}^N\int_{\Omega_k} \!\!\!v^{p_i}\left|\frac{\partial}{\partial x_i}\varphi\right|^{p_i}\!\!\varphi^{q-p_i} dx+\varepsilon \sum_{i=1}^N\!\int_{\Omega_k} \!\left|\frac{\partial}{\partial x_i}v\right|^{p_i} \!\!\varphi^{q} dx
+C(\varepsilon) \sum_{i=1}^N\!\int_{\Omega_k} |\mathcal F_i|^{p_i}\!\varphi^q dx.
\end{equation}

\noindent Now putting together inequalities \eqref{I1}, \eqref{I2}, \eqref{I4} and \eqref{I6}, rearranging and choosing $\varepsilon$ small enough then inequality \eqref{I0} becomes
\begin{align}\label{I_finale}
\!\! \sum_{i=1}^N\!\int_{\Omega_k} \!\left|\frac{\partial}{\partial x_i}v\right|^{p_i}\!\! \varphi^{q}\!dx
\!\leq &
C\!\left[\sum_{i=1}^N\!\int_{\Omega_k} \!\!|b_i| k^{\frac{\overline{p}}{p_i'}}\left|\frac{\partial}{\partial x_i}(v\varphi^q)\right| dx\right.
\!+\!\sum_{i=1}^N \|b_i\|^{p'_i}_{L^{\frac{Np'_i}{\overline{p}},\infty}}\!\!\! \left(\prod_{i=1}^N\left\| \frac{\partial}{\partial x_i}\left(v\varphi^{\frac{q}{p_i}}\right)\right\|_{L^{p_i}}^{\frac{1}{N}} \!\right)^{\overline{p}}
\nonumber \\
&
+ \left.\sum_{i=1}^N\int_{\Omega_k} v^{p_i}\left|\frac{\partial}{\partial x_i}\varphi\right|^{p_i}\varphi^{q-p_i} dx
+ \sum_{i=1}^N\int_{\Omega_k} |\mathcal F_i|^{p_i}\varphi^q dx\right],
\end{align}
for suitable constant $C=C(\vec{p},\alpha, \vec{\beta}, N)$ that from now on could change from line to line.
Now we note that
\begin{equation*}
\begin{split}
\sum_{i=1}^N&\int_{\Omega_k} \left|\frac{\partial}{\partial x_i}\left(v\varphi^{\frac{q}{p_i}}\right)\right|^{p_i} dx
\leq
\sum_{i=1}^N\int_{\Omega_k} 2^{p_i-1}\left|\frac{\partial}{\partial x_i}v\right|^{p_i} \varphi^{q}dx+ \sum_{i=1}^N\int_{\Omega_k} 2^{p_i-1} v^{p_i}\left|\frac{\partial}{\partial x_i}\varphi\right|^{p_i}\varphi^{q-p_i} dx\\
 &\leq
C\left[\sum_{i=1}^N\int_{\Omega_k} |b_i| k^{\frac{\overline{p}}{p_i'}}\left|\frac{\partial}{\partial x_i}(v\varphi^q)\right| dx+\sum_{i=1}^N \|b_i\|^{p'_i}_{L^{\frac{Np'_i}{\overline{p}},\infty}} \left(\prod_{i=1}^N\left\| \frac{\partial}{\partial x_i}\left(v\varphi^{\frac{q}{p_i}}\right)\right\|_{L^{p_i}}^{\frac{1}{N}} \right)^{\overline{p}}\right.\\
&
\left.+ \sum_{i=1}^N\int_{\Omega_k}  v^{p_i}\left|\frac{\partial}{\partial x_i}\varphi\right|^{p_i}\varphi^{q-p_i} dx + \sum_{i=1}^N\int_{\Omega_k} |\mathcal F_i|^{p_i}\varphi^q dx\right],
\end{split}
\end{equation*}
and so
\begin{align}\label{stima A}
\left(\int_{\Omega_k} \left|\frac{\partial}{\partial x_j}\left(v\varphi^{\frac{q}{p_j}}\right)\right|^{p_j} dx\right)^{\frac{1}{p_j}}
\leq &
\left( \sum_{i=1}^N\int_{\Omega_k} \left|\frac{\partial}{\partial x_i}\left(v\varphi^{\frac{q}{p_i}}\right)\right|^{p_i} dx\right)^{\frac{1}{p_j}}
\\
 &\leq C^{\frac{1}{p_j}}
\left[\sum_{i=1}^N\int_{\Omega_k} |b_i| k^{\frac{\overline{p}}{p_i'}}\left|\frac{\partial}{\partial x_i}(v\varphi^q)\right| dx
+\sum_{i=1}^N \|b_i\|^{p'_i}_{L^{\frac{Np'_i}{\overline{p}},\infty}} A^{\frac{\overline{p}}{N}}\right.
\nonumber\\
&
+ \left.\sum_{i=1}^N\int_{\Omega_k} v^{p_i}\left|\frac{\partial}{\partial x_i}\varphi\right|^{p_i}\varphi^{q-p_i} dx + \sum_{i=1}^N\int_{\Omega_k} |\mathcal F_i|^{p_i}\varphi^q dx\right]^{\frac{1}{p_j}},
\nonumber
\end{align}
where $A=\prod_{i=1}^N\left\| \frac{\partial}{\partial x_i}\left(v\varphi^{\frac{q}{p_i}}\right)\right\|_{L^{p_i}}$.   Making the product on the left and right sides of \eqref{stima A}, we get
\begin{equation}\label{stima prodotti_A}
\begin{split}
A
\leq &
\,\,C\left[\left(\sum_{i=1}^N\int_{\Omega_k} |b_i| k^{\frac{\overline{p}}{p_i'}}\left|\frac{\partial}{\partial x_i}(v\varphi^q)\right| dx\right)^{\frac{N}{\overline{p}}}+\left(\sum_{i=1}^N \|b_i\|^{p'_i}_{L^{\frac{Np'_i}{\overline{p}},\infty}}\right)^{\frac{N}{\overline{p}}} A\right.\\
&
+ \left.\left(\sum_{i=1}^N\int_{\Omega_k} v^{p_i}\left|\frac{\partial}{\partial x_i}\varphi\right|^{p_i}\varphi^{q-p_i} dx\right)^{\frac{N}{\overline{p}}} +\left( \sum_{i=1}^N\int_{\Omega_k} |\mathcal F_i|^{p_i}\varphi^q dx\right)^{\frac{N}{\overline{p}}}\right].
\end{split}
\end{equation}
The assumption that the norms $\|b_i\|^{p'_i}_{L^{\frac{Np'_i}{\overline{p}},\infty}}$ are small than a suitable constant depending on $\vec{p},\alpha, \vec{\beta}, N$
 and \eqref{stima prodotti_A} allow us to obtain
\begin{equation}\label{stima prod_A_fin}
  \begin{split}
  A \leq &
  C\left[ \left(\sum_{i=1}^N\int_{\Omega_k} |b_i| k^{\frac{\overline{p}}{p_i'}}\left|\frac{\partial}{\partial x_i}(v\varphi^q)\right| dx\right)^{\frac{N}{\overline{p}}}+\left(\sum_{i=1}^N\int_{\Omega_k} v^{p_i}\left|\frac{\partial}{\partial x_i}\varphi\right|^{p_i}\varphi^{q-p_i} dx\right)^{\frac{N}{\overline{p}}}\right.\\
  &
\left.+\left( \sum_{i=1}^N\int_{\Omega_k} |\mathcal F_i|^{p_i}\varphi^q dx\right)^{\frac{N}{\overline{p}}}\right].
  \end{split}
\end{equation}
Substituting \eqref{stima prod_A_fin} in \eqref{I_finale}, by easy calculations it follows
\begin{equation*}\label{I_finaleBIS}
\begin{split}
\sum_{i=1}^N\int_{\Omega_k} \left|\frac{\partial}{\partial x_i}v\right|^{p_i} \varphi^{q}dx
\leq &
C\left(\sum_{i=1}^N\int_{\Omega_k} b_i k^{\frac{\overline{p}}{p_i'}}\left|\frac{\partial}{\partial x_i}(v\varphi^q)\right| dx
+\sum_{i=1}^N\int_{\Omega_k}  |v|^{p_i}\left|\frac{\partial}{\partial x_i}\varphi\right|^{p_i}\varphi^{q-p_i} dx\right.\\
&+\left.\sum_{i=1}^N\int_{\Omega_k} |\mathcal F_i|^{p_i}\varphi^q dx\right).
\end{split}
\end{equation*}

At this point we multiply both sides of previous inequality by $k^{\gamma}$ for fixed $\gamma>0 $ that will be chosen later and we integrate with respect to $k$ over $[0, K]$ for fixed $K>0$.  A repeated use of Fubini's theorem gives

\begin{align}\label{I_finaleBIS2}
\sum_{i=1}^N\int_{\Omega} &\left|\frac{\partial}{\partial x_i}u\right|^{p_i}\varphi^{q} |T_Ku|^{\gamma+1}dx\leq  C\left[\sum_{i=1}^N\int_{\Omega} b_i |T_Ku|^{\frac{\overline{p}}{p_i'} +\gamma+1} \left(  \left|\frac{\partial}{\partial x_i}u\right|  \varphi^q  +  q \varphi^{q-1}
\left|u\frac {\partial}{\partial x_i} \varphi \right| \right)dx
\right.
\nonumber\\
&\left. +\sum_{i=1}^N\int_{\Omega}  |u|^{p_i}\left|\frac{\partial}{\partial x_i}\varphi\right|^{p_i}\varphi^{q-p_i} |T_Ku|^{\gamma+1}  dx
+\sum_{i=1}^N\int_{\Omega} |\mathcal F_i|^{p_i}\varphi^q|T_Ku|^{\gamma+1} dx\right]
\end{align}
where $C=C(\gamma,\alpha,N,\vec{p},\vec{\beta})$. Now by H\"{o}lder inequality and Young inequality we get
  \begin{equation*}
  \begin{split}
    \sum_{i=1}^N\int_{\Omega} b_i |T_Ku|^{\frac{\overline{p}}{p_i'} +\gamma+1} &\left(  \left|\frac{\partial}{\partial x_i}u\right|  \varphi^q  +  q \varphi^{q-1} \left|u\frac {\partial}{\partial x_i} \varphi \right| \right)dx \leq\\
    &
     \left(\sum_{i=1}^N\int_{\Omega}\left|\frac{\partial}{\partial x_i}u\right|^{p_i} |T_Ku|^{\gamma+1} \varphi^q dx\right)^{\frac{1}{p_i}}
     \left( \sum_{i=1}^N\int_{\Omega} b_i^{p'_i} |T_Ku|^{\gamma+1+\overline{p}} \varphi^q dx\right)^{\frac{1}{p'_i}}\\
     & +\left(\sum_{i=1}^N\int_{\Omega}\left|u\frac{\partial}{\partial x_i}\varphi\right|^{p_i} |T_Ku|^{\gamma+1} \varphi^{q-p_i} dx\right)^{\frac{1}{p_i}} \left( \sum_{i=1}^N\int_{\Omega} b_i^{p'_i} |T_Ku|^{\gamma+1+\overline{p}} \varphi^q dx\right)^{\frac{1}{p'_i}}\\
     & \leq \epsilon \sum_{i=1}^N\int_{\Omega}\left|\frac{\partial}{\partial x_i}u\right|^{p_i} |T_Ku|^{\gamma+1} \varphi^q dx+
     \epsilon \sum_{i=1}^N\int_{\Omega}\left|u\frac{\partial}{\partial x_i}\varphi\right|^{p_i} |T_Ku|^{\gamma+1} \varphi^{q-p_i} dx\\
     & + 2C(\epsilon) \|b_i\|_{L^{\frac{Np'_i}{\overline{p}},\infty}} \|T_Ku^{\frac{\gamma+1}{\overline{p}}+1}\varphi^{\frac{q}{\overline{p}}}\|^{\overline{p}}_{L^{\overline{p}^*,\overline{p}}},
\end{split}
  \end{equation*}
 where $\epsilon$ is a positive constant small enough.  Substituting the previous inequality in \eqref{I_finaleBIS2} and by \eqref{Lor ineq bis}, we obtain
  \begin{align}\label{l_finaleBIS3}
\sum_{i=1}^N\int_{\Omega}& \left|\frac{\partial}{\partial x_i}u\right|^{p_i} \varphi^{q} |T_Ku|^{\gamma+1}dx\leq
 C\left[ \sum_{i=1}^N\int_{\Omega}\left|u\frac{\partial}{\partial x_i}\varphi\right|^{p_i} |T_Ku|^{\gamma+1} \varphi^{q-p_i} dx\right.\\
 &+ \sum_{i=1}^N\|b_i\|_{L^{\frac{Np'_i}{\overline{p}},\infty}} \left(\prod_{i=1}^N\left\| \frac{\partial}{\partial x_i}\left(|T_Ku|^{\frac{\gamma+1}{p_i}+1}\varphi^{\frac{q}{p_i}}\right)\right\|_{L^{p_i}}^{\frac{1}{N}} \right)^{\overline{p}}
 \left. +  \sum_{i=1}^N\int_{\Omega} |\mathcal F_i|^{p_i}\varphi^q|T_Ku|^{\gamma+1} dx\right].\nonumber
  \end{align}
  Denoting $B= \prod_{i=1}^N\left\| \frac{\partial}{\partial x_i}\left(|T_Ku|^{\frac{\gamma+1}{p_i}+1}\varphi^{\frac{q}{p_i}}\right)\right\|_{L^{p_i}}$, it follows
  \begin{align}\label{stima B}
&   \sum_{i=1}^N \int_{\Omega} \left|\frac{\partial}{\partial x_i} \left(|T_Ku|^{\frac{\gamma+1}{p_i}+1}\varphi^{\frac{q}{p_i}}\right)\right|^{p_i} dx\\
  & \leq
   \sum_{i=1}^N\int_{\Omega} 2^{p_i-1}\left|\frac{\partial}{\partial x_i}u\right|^{p_i} \varphi^{q} |T_Ku|^{\gamma+1}dx
  + \sum_{i=1}^N\int_{\Omega}2^{p_i-1}\left|u\frac{\partial}{\partial x_i}\varphi\right|^{p_i} |T_Ku|^{\gamma+1} \varphi^{q-p_i} dx
  \nonumber\\
  & \leq C \left[ \sum_{i=1}^N\int_{\Omega}\left|u\frac{\partial}{\partial x_i}\varphi\right|^{p_i} |T_Ku|^{\gamma+1} \varphi^{q-p_i} dx
 + \sum_{i=1}^N   \|b_i\|_{L^{\frac{Np'_i}{\overline{p}},\infty}} B^{\frac{\overline{p}}{N}}+  \sum_{i=1}^N\int_{\Omega} |\mathcal F_i|^{p_i}\varphi^q|T_Ku|^{\gamma+1} dx\right].\nonumber
  \end{align}
%
  Previous inequality  give us an estimate of the $j-th$ addendum of the sum at the left-hand side of \eqref{stima B} as well. Then elevating to the power $\frac 1{p_j}$, making the product on the left and right sides of \eqref{stima B}, we get
\begin{equation}\label{stima prod B}
 \begin{split}
B
 &\leq C \left[ \left( \sum_{i=1}^N\int_{\Omega} \left|u\frac{\partial}{\partial x_i}\varphi\right|^{p_i} |T_Ku|^{\gamma+1} \varphi^{q-p_i} dx\right)^{\frac{N}{\overline{p}}}+ \left( \sum_{i=1}^N \|b_i\|_{L^{\frac{Np'_i}{\overline{p}},\infty}}\right)^{\frac{N}{\overline{p}}}  B\right.\\
 &+ \left.\left( \sum_{i=1}^N\int_{\Omega} |\mathcal F_i|^{p_i}\varphi^q|T_Ku|^{\gamma+1} dx\right)^{\frac{N}{\overline{p}}}\right].
   \end{split}
  \end{equation}
Using again that the norms $\|b_i\|^{p'_i}_{L^{\frac{Np'_i}{\overline{p}},\infty}}$ are small than a suitable constant depending on $\gamma,\alpha,N,\vec{p}, \vec{\beta}$
 from \eqref{stima prod B} we obtain

  \begin{equation}\label{stima prod B_fin}
 \!\!\! B\leq \!C\!\left[ \left( \sum_{i=1}^N\int_{\Omega} \left|u\frac{\partial}{\partial x_i}\varphi\right|^{p_i} \!|T_Ku|^{\gamma+1} \!\!\varphi^{q-p_i} dx\right)^{\frac{N}{\overline{p}}}\!\!+\!\left( \sum_{i=1}^N\int_{\Omega} |\mathcal F_i|^{p_i}\varphi^q|T_Ku|^{\gamma+1} dx\right)^{\frac{N}{\overline{p}}}\!\right].\!
  \end{equation}
  Substituting \eqref{stima prod B_fin} in \eqref{l_finaleBIS3} it follows that

    \begin{equation}\label{punto finale}
  \begin{split}
 &\sum_{i=1}^N\left\| \frac{\partial}{\partial x_i}\left(|T_Ku|^{\frac{\gamma+1}{p_i}+1}\varphi^{\frac{q}{p_i}}\right)\right\|_{L^{p_i}}^{p_i}\\
 & \leq C\left[ \sum_{i=1}^N\int_{\Omega}\left|u\frac{\partial}{\partial x_i}\varphi\right|^{p_i} |T_Ku|^{\gamma+1} \varphi^{q-p_i} dx+  \sum_{i=1}^N\int_{\Omega} |\mathcal F_i|^{p_i}\varphi^q|T_Ku|^{\gamma+1} dx\right].
  \end{split}
  \end{equation}

At this point, let us assume that  {{$u\in L^{\mu p_{\max}}_{loc}(\Omega)$ }, where $\mu$ is defined in \eqref{mu}. Recalling that  $\mathcal F_i\in L^{r_i}_{loc}(\Omega)$ for every $i=1,...,N$, } then $\mathcal F_i \in L_{loc}^{p_i\mu}(\Om)$. Hence by H\"{o}lder inequality with exponents $\mu$ and $\mu'$, from \eqref{punto finale} we have
  \begin{equation}\label{stima regolarita}
  \begin{split}
  \prod_{i=1}^N &\left\| \frac{\partial}{\partial x_i} \left(|T_Ku|^{\frac{\gamma+1}{p_i}+1}  \varphi^{\frac{q}{p_i}}\right)\right\|_{L^{p_i}}^{\frac{1}{N}} \leq     C  \left( \int_{\Omega} |T_Ku|^{(\gamma+1) \frac{\mu}{\mu-1}} \varphi^{\frac{\overline{p}^* q}{\overline{p}}} dx\right)^{\frac{1}{\overline{p}}    \left(1-\frac 1\mu \right)}\\
 &\qquad\qquad\qquad \times\left[  \sum_{i=1}^N \left( \int_{\Omega}\left|u\frac{\partial}{\partial x_i}\varphi\right|^{p_i  \mu }  \varphi^{\alpha_1} dx\right)^{\frac 1{\bar p \mu}} + \sum_{i=1}^N  \left( \int_{\Omega} |\mathcal F_i|^{p_i \mu }\varphi^{\alpha_2} dx\right)^{\frac{1}{\overline{p} \mu }} \right]\\
\end{split}
  \end{equation}
with $\alpha_1= \left[  1- \frac{\overline{p}^*}{\overline{p}} \left(  1-\frac 1\mu \right ) \right]   \mu q - p_i \mu $,
    and $\alpha_2=  \left[ 1- \frac{\overline{p}^*}{\overline{p}} \left( 1-\frac 1\mu \right)  \right] q\mu$. Now, by  \eqref{hp-ri} we have $\alpha_2>0 $  and we can now choose $q$ large enough to have $\alpha_1>0$. Finally,  we choose now $\gamma =\frac{N\bar p}{N-\bar p \mu} (\mu -1)-1$
  so that

  \begin{equation*}\label{gamma}
 (\gamma+1)   \left( \frac{\mu}{ \mu-1} \right) \frac{ \overline{p}}{\gamma+1+\overline{p}}=\overline{p}^*.
  \end{equation*}
 Thus, using Proposition \ref{Prop dis}, \eqref{stima regolarita} becomes
\begin{equation*}
\begin{split}
&\prod_{i=1}^N\left\| \frac{\partial}{\partial x_i}\left(|T_Ku|^{\frac{\gamma+1}{p_i}+1}\varphi^{\frac{q}{p_i}}\right)\right\|_{L^{p_i}}^{\frac{1}{N}}\\
& \leq  C \left\||T_Ku|^{\frac{\gamma+1}{\overline{p}}+1}
  \varphi^{\frac{q}{\overline{p}}}\right\|_{L^{\overline{p}^*}}^{ \left(1-\frac 1\mu \right) \frac{\overline{p}^*}{\overline{p}}   }
   \left( \sum_{i=1}^N\left\|\left|u\frac{\partial}{\partial x_i}\varphi\right| \varphi^{\frac{\alpha_1}{p_i\mu }}  \right\|_{L^{p_i\mu }}^{\frac{p_i}{\overline{p}}}+\sum_{i=1}^N \left\||\mathcal F_i|\varphi^{\frac{\alpha_2}{p_i\mu }}\right\|_{L^{p_i \mu  }}^{\frac{
  p_i }{\overline{p}}} \right)\\
    &\leq C\prod_{i=1}^N\left\| \frac{\partial}{\partial x_i}\left(|T_Ku|^{\frac{\gamma+1}{p_i}+1}\varphi^{\frac{q}{p_i}}\right)\right\|_{L^{p_i}}^{ \left(1-\frac 1\mu \right) \frac{\overline{p}^*}{\overline{p}N}}
  \left(
  \sum_{i=1}^N\left\|\left|u\frac{\partial}{\partial x_i}\varphi\right| \varphi^{\frac{\alpha_1}{p_i\mu }}  \right\|_{L^{p_i\mu }}^{\frac{p_i}{\overline{p}}}
 +\sum_{i=1}^N \left\||\mathcal F_i|\varphi^{\frac{\alpha_2}{p_i \mu  }}\right\|_{L^{p_i \mu  } }^{\frac{
  p_i}{\overline{p}}}
   \right).
\end{split}
\end{equation*}
 Rearranging the previous inequality it follows that
\begin{equation}\label{limite}
\begin{split}
 \left(\prod_{i=1}^N\left\| \frac{\partial}{\partial x_i}\left(|T_Ku|^{\frac{\gamma+1}{p_i}+1}\varphi^{\frac{q}{p_i}}\right)\right\|_{L^{p_i}}^{\frac{1}{N}}\right)^\rho&
 \leq C  \left(
  \sum_{i=1}^N\left\|\left|u\frac{\partial}{\partial x_i}\varphi\right| \varphi^{\frac{\alpha_1}{p_i\mu }}  \right\|_{L^{p_i\mu }}^{\frac{p_i}{\overline{p}}}
 +\sum_{i=1}^N \left\||\mathcal F_i|\varphi^{\frac{\alpha_2}{p_i \mu  }}\right\|_{L^{p_i \mu  } }^{\frac{
  p_i}{\overline{p}}}
   \right)\\
\end{split}
\end{equation}
 where $\rho:= \left[  1- \frac{\overline{p}^*}{\overline{p}} \left(  1-\frac 1\mu \right ) \right] $ that is positive by  \eqref{hp-ri}. Finally letting $K\rightarrow +\infty$ in \eqref{limite}, we conclude
\begin{equation*}
\begin{split}
 \left(\prod_{i=1}^N\left\| \frac{\partial}{\partial x_i}\left(|u|^{\frac{\gamma+1}{p_i}+1}\varphi^{\frac{q}{p_i}}\right)\right\|_{L^{p_i}}^{\frac{1}{N}}\right)^\rho&
  \leq C  \left(
  \sum_{i=1}^N\left\|\left|u\frac{\partial}{\partial x_i}\varphi\right| \varphi^{\frac{\alpha_1}{p_i\mu }}  \right\|_{L^{p_i\mu }}^{\frac{p_i}{\overline{p}}}
 +\sum_{i=1}^N \left\||\mathcal F_i|\varphi^{\frac{\alpha_2}{p_i \mu  }}\right\|_{L^{p_i \mu  } }^{\frac{
  p_i}{\overline{p}}}
   \right).\\
\end{split}
\end{equation*}
 Since $\left( \frac{\gamma+1}{\bar p}+1 \right)\bar p^*= \frac{N\bar p \mu}{ N-\bar p\mu}$, using again Proposition \ref{Prop dis}  we obtain that

 \begin{equation}\label{def s}
 u\in L^{s(\mu)}_{loc}(\Om)\qquad \text{ with } s(\mu):=(\bar p \mu)^*
 .
 \end{equation}

  We observe that previous argument works directly when  $\mu p_{\max}\leq \overline{p}^*$, since in this case $u\in W^{1,\vec p}_{loc}(\Om)$ implies that $u\in L^{\mu p_i}_{loc}(\Om)$ for every $i=1,....,N$.  If otherwise there exist $i\in \{1,...,N\}$ such that $\mu p_i> \overline{p}^*$ we use a bootstrap procedure.

 Precisely, if there exist
  $i_j \in \{1,...,N\}$ such that
$\mu p_{i_j}> \overline{p}^*$ , $j=1,...,m$, with $m\leq N$, we repeat the previous argument  (from \eqref{punto finale} to \eqref{def s} )   with $r_{i_j} $, $j=1,...,m$ replaced by $s(1)= \overline{p}^*$ and $\mu$ replaced by the corresponding new minimum  in \eqref{mu}, \emph{i.e.} $ \mu_1= \frac{\bar p^*}{p_{\max}}.$   In this way   we find  $u\in L^{s(\mu_1)}_{loc}(\Om)$ 
and
\[
s(\mu_1)-s(1) > \bar p^* - p_{\max}.
\]

At this point, if $\mu p_{\max}\leq s(\mu_1)$ we can use  the information $u\in L^{s(\mu_1)}_{loc}(\Om)$ to conclude our proof as before. Otherwise we repeat the procedure again, that is: if there exist
  $i_{j} \in \{1,...,N\}$ such that
$\mu p_{i_{j}}>s(\mu_1)$,  we repeat the previous argument  (again, from \eqref{punto finale} to \eqref{def s} )   with $r_{i_{j}} $ replaced by $s(\mu_1)$ and  $\mu$ repleaded by $\mu_2:= \frac{s(\mu_1)}{p_{\max} }$ so that   we find   $u\in L^{s(\mu_2)}_{loc}(\Om)$  with $s(\mu_2) =   \left( \bar p  \, \frac{ s(\mu_1)} {p_{\max}}\right)  ^*$.
Using the convexity of the function $f(p) =\frac{Np}{N-p}$ we find
\[
s(\mu_2)-s(\mu_1) \ge \frac{N^2}{(N-\bar p)^2}\frac{\bar p} {p_{\max}} (s(\mu_1) - \bar p^*)>\frac{\bar p^*} {p_{\max}}  \left(
\bar p^* -p_{\max}\right) ,
\]
and so on. Since, if necessary, at the $h$-th step one has  $\mu_h=\frac{s(\mu_{h-1})}{p_{\max}}$ and
\[
s(\mu_h)-s(\mu_{h-1})> \left( \frac{\bar p^*} {p_{\max}}\right)^{h-1} \left(
\bar p^* -p_{\max}\right),
\]
it is now clear that, in a finite number of times, we can conclude the proof of Step 1.

\noindent\textit{Step 2. Dropping the smallness assumption}

 Now we remove the smallness assumptions on   $\|b_i\|_{L^{\frac{Np'_i }{\overline{p}},\infty}}$ assuming  that

 \begin{equation*}\label{distanza step}
\max_i \left\{ { \rm{dist}}_{L^{\frac{N p'_i }{\bar p},\infty}(\Om)} (b_i, L^\infty(\Om))\right\}
\end{equation*}
 is sufficiently small. Setting
 \begin{equation*}\label{theta}
\theta_{i}(x)=
\left\{
           \begin{array}{ll}
            \frac{T_M b_i(x)}{b_i(x)} & \hbox{ if } b_i(x)\neq0  \\
             1  & \hbox{  if }b_i(x)=0  \\
           \end{array}
         \right.
\end{equation*}
for $M>0,$   we  rewrite equation \eqref{sol1} in the following form
  \begin{equation}\label{solthetabis}
\!\!\Si \!\int_\Om \!\!  \left (\mathcal{A}_i(x,\nabla u) \!+\! (1-\theta_i (x) )\mathcal B_i(x,u) \right) \dei \varphi \, dx\! =\! \Si \!\int_\Om  \!( \mathcal |F_i|^{p_i-2} \mathcal F_i -\theta_i (x) \mathcal B_i(x,u) )\, \dei \varphi \,dx.
\end{equation}
Assumption  \eqref{b1}  yields
 \begin{equation*}\label{due}
 |(1-\theta_i (x) )\mathcal B_i(x,u)|\leqslant  |b_i (x)- T_Mb_i (x)| |u|^{\frac {\bar p}{p'_i}} \qquad i=1,...,N
 \end{equation*}
and
  \begin{equation}\label{tre}
|\theta_i (x) \mathcal B_i(x,u) | \leqslant M |u|^{\frac{\bar p}{p'_i}}.
  \end{equation}

 Using \eqref{distlim} we can find a sufficiently large constant  $M>0$ independent on $i$, such that for every  $i=1,...,N$ the norm
$\|b_i - T_Mb_i\|_{L^{\frac{Np'_i}{\bar p},\infty } }$ is small in order to verify the smallness assumption of Step 1.

Hence,  we can apply  the result obtained in the previous step  to \eqref{solthetabis} whenever $
G_i(x,u):=[  \mathcal |F_i|^{p_i-2} \mathcal F_i(x) -\theta_i (x) \mathcal B_i(x,u)] \in L^{\frac {r_i}{p_i-1}}_{loc} (\Om),
$ for every $i=1,....,N.$

 Using \eqref{tre}, it is enough to have
\begin{equation}\label{finalboot}
|u|^{\frac{\bar p}{p_i}}\in L^{r_i}_{loc} (\Om), \mbox{ for every }i=1,....,N.
\end{equation}

At this point we observe that  if   $r_i\leq \frac {\overline{p}^*}{\overline p} p_i$ for every $i=1,....,N$, then \eqref{finalboot} holds directly by Sobolev inequality.  If otherwise there exist $i\in \{1,...,N\}$ such that $r_i>\frac {\overline{p}^*}{\overline p} p_i$ we use a bootstrap procedure  similar to the one used to conclude the proof of Step 1.

  Precisely, if there exist
  $i_j \in \{1,...,N\}$ such that
$r_{i_j}> \frac {\overline{p}^*}{\overline p} p_{i_{j}}$ , $j=1,...,m$, with $m\leq N$,  we replace  $r_{i_j} $, $j=1,...,m$  with $\frac {\overline{p}^*}{\overline p} p_{i_j}$ and  $\mu $  with $\mu_1:=\min\{\mu,\frac {\overline{p}^*}{\overline p}\}$ respectively. Now, if $\mu_1=\mu$ the proof is completed, since in this case we find $u\in L^{s(\mu)}_{loc}(\Om) $ with $s(\mu) $ as in \eqref{def s}. Otherwise, if $\mu_1= \frac {\overline{p}^*}{\overline p} $  we find  $u\in L^{s(\mu_1)}_{loc}(\Om)$ with $s (\mu_1)= \bar p^{**}$
and $s(\mu_1)-s(1) > \bar p^* - p_{\max}$.


%
\noindent Now, if $r_{i_j}\leq \frac {\overline{p}^{**}}{\overline p} p_{i_j}$ for every $j=1,...,m$, then $u\in L^{s(\mu_1)}_{loc}(\Om)$ gives \eqref{finalboot}. Otherwise we repeat the procedure again.
%
It is clear that, in a finite number of times, we can conclude our proof. $\blacksquare$

\begin{remark} In the previous proof  the definition of $\mu$ in \eqref{mu} as a minimum obviously appears. Indeed, using H\"{o}lder inequality in \eqref{stima prod B_fin} with exponent
$\zeta$ and $\zeta'$, it is enough to require $p_i \zeta\leq r_i$, $i=1,...,N.$ Hence, the best choice of such $\zeta$ is  $\mu$.
\end{remark}

\begin{remark}\label{remark0}

We stress that assumption $\overline{p}^*>p_{\max}$ is essential for our technique.  Indeed, at the end of the  Step 1 of previous proof, in order to start with the bootstrap argument we need $u\in L^{r_i}_{loc}(\Om) $ with $r_i\leq \bar p^*$ for every $i=1,..,N$ that means $p_i<\bar p^*$, $i=1,...,N.$

\end{remark}

\begin{remark}\label{remark1}
If $\Omega$ is bounded open set with Lipschitz boundary and we consider homogeneous Dirichlet problems we can argue as in Theorem \ref{mainTHM} to obtain a regularity  of solutions without  restriction $p_{\max}< \overline{p}^*$. Precisely we have the following result.
Assume that \eqref{ii1}-\eqref{b1} are fulfilled, let  $1<\bar p<N$  and let $r_1,\cdots,r_N$ be such that \eqref{hp-ri} holds.
 There exists a positive constant  $d=d(\vec r,N, \alpha, \vec{p})$  such that if
\begin{equation*}
\max_i \left\{ \emph{dist}_{L^{\frac{N p'_i }{\bar p},\infty}(\Om)} (b_i, L^\infty(\Om)):\right\} <d
\end{equation*}
and  $u\in W_0^{1,\vec p}(\Om)$
is a weak solution to \eqref{diffusion}  with $\mathcal F_i\in L^{r_i} (\Om)$, then
\begin{equation}\label{rem1}
u\in L^{s}(\Om) \quad \text{with }s=\max\{(\mu \bar p)^*,\mu p_{\max}\}.
\end{equation}
{Indeed when the $\max\{(\mu \bar p)^*,\mu p_{\max}\}=(\mu \bar p)^*$  the proof of \eqref{rem1} runs as Theorem \ref{mainTHM} taking into account that we are managing not local solutions. Otherwise if  $\max\{(\mu \bar p)^*,\mu p_{\max}\}=\mu p_{\max}$ we can reason as in Step 1 of Theorem \ref{mainTHM} but instead of \eqref{punto finale} we obtain


 \begin{equation}\label{stima regolarita bis}
  \begin{split}
  \left\| \frac{\partial}{\partial x_j}\left(|T_Ku|^{\frac{\gamma+1}{p_{\max}}+1}\right)\right\|_{L^{p_{\max}}}^{p_{\max}}
  & \leq C \sum_{i=1}^N  \left( \int_{\Omega} |\mathcal F_i|^{p_i \mu } dx\right)^{\frac{1}{ \mu }}\left( \int_{\Omega} |T_Ku|^{(\gamma+1) \frac{\mu}{\mu-1}}  dx\right)^{    \left(1-\frac 1\mu \right)}.
  \end{split}
  \end{equation}
Now choosing $\gamma>0$ such that
\begin{equation}\label{gamma_bis}
 (\gamma+1)   \left( \frac{\mu}{\mu-1} \right) \frac{ p_{\max}}{\gamma+1+p_{\max}}=p_{\max},
  \end{equation}
by Poincar\'{e} inequality \eqref{dis poincare},   \eqref{stima regolarita bis} becomes
\begin{equation}\label{limite bis}
\begin{split}
 \left\| \frac{\partial}{\partial x_j}\left(|T_Ku|^{\frac{\gamma+1}{p_{\max}}+1}  \right)\right\|_{L^{p_{\max}}}
 ^{\frac{p_{\max}}{\mu}}&
 \leq C\sum_{i=1}^N \left\|\mathcal F_i\right\|_{L^{r_i}}^{\frac{r_i}{\mu}},
\end{split}
\end{equation}
where the constant $C$ depends now also by $\Om.$
Letting $K\rightarrow +\infty$ in \eqref{limite bis}, observing that  equality \eqref{gamma_bis} implies  $\left( \frac{\gamma+1}{p_{\max}}+1 \right)= \mu$, and using again  Poincar\'{e} inequality \eqref{dis poincare},  we obtain \eqref{rem1} with $s=\mu p_{\max}$.

}


\end{remark}

\begin{remark}   Let us consider the homogeneous Dirichlet problem in a bounded open set with Lipschitz boundary $\Omega$ under the assumptions \eqref{ii1}-\eqref{ii2}, when  \eqref{b1} is replaced by

\begin{equation}\label{b2}
|\mathcal B_i(x,s)| \leqslant b_i(x)|s|^{ p_i-1}\quad \forall i
\end{equation}
for a.e. $x\in \Om$ and for every $s\in \mathbb R$  and $b_i\in L^{\infty}(\Omega)$ for all $i$. In this case we obtain the same regularity result as in Remark \ref{remark1}. Indeed one can obtain the analogous  inequality of \eqref{punto finale} using Poincar\'{e} inequality \eqref{dis poincare} instead of Sobolev inequalities. Starting from the obtained estimate, one can conclude the proof as before under assumption \eqref{b2}.
\end{remark}

\section{Proof of Theorem \ref{Th2}}

The first step of our proof is the boundedness of a local weak solution  without the lower order terms in order to adapt Stampacchia's arguments \cite{S} to the anisotropic case in the same spirit of Lemma 5.4 in \cite{Lady} when one deals with local solutions. Finally, our assumptions on the summability of the coefficients $b_i$  allow us  to apply  Corollary 1.3 concluding also  when $\mathcal{B}_i\not\equiv0$.

\medskip

\textit{Step 1. Proof assuming that $\mathcal B_i$ vanishes.}
Using the same test function $w$ as in Theorem \ref{mainTHM} (Step 1)
and assumptions \eqref{ii1} and \eqref{ii2}, we get
\begin{align}\label{I0_L}
\alpha \sum_{i=1}^N\int_{\Omega_k}\!&\left|\frac{\partial}{\partial x_i}v\right|^{p_i}\!\!\varphi^q dx
\leq \sum_{i=1}^N\!\beta_i\!\!\int_{\Omega_k} \!\left|\frac{\partial}{\partial x_i}v\right|^{p_i-1}\!\!\varphi^{q-1}
\left|\frac{\partial}{\partial x_i}\varphi\right|v dx
\!+\!\sum_{i=1}^N\int_{\Omega_k}\!\! |\mathcal F_i|^{p_i-1}\left|\frac{\partial}{\partial x_i}v\right|\varphi^q  dx
\nonumber \\
&+\sum_{i=1}^N\int_{\Omega_k} |\mathcal F_i|^{p_i-1}v\left|\frac{\partial}{\partial x_i}\varphi\right|q\varphi^{q-1} dx:=L_1+L_2+L_3.
\end{align}
Since $L_1=I_1$ and $L_2+L_3=I_5+I_6$, where $I_1, I_5, I_6$ are defined in Theorem \ref{mainTHM} (Step 1), putting together inequalities \eqref{I1}, \eqref{I6} and choosing $\varepsilon$ small enough, inequality \eqref{I0_L} became
\begin{equation}\label{I_finale_L}
\begin{split}
 \sum_{i=1}^N\int_{\Omega_k} \left|\frac{\partial}{\partial x_i}v\right|^{p_i} \varphi^{q}dx
\leq &
C\left[
\sum_{i=1}^N\int_{\Omega_k} v^{p_i}\left|\frac{\partial}{\partial x_i}\varphi\right|^{p_i}\varphi^{q-p_i} dx
+ \sum_{i=1}^N\int_{\Omega_k} |\mathcal F_i|^{p_i}\varphi^q dx\right],
\end{split}
\end{equation}
for suitable positive constant $C=C(\alpha,N,\vec{p}, \vec{\beta})$ that will change from line to line.


Now we choose the cut-off function $\varphi$. Let $ \sigma>\tau>0$, we fix two concentric balls $B_{\tau}\subset B_{\sigma} \subset\subset \Omega$ of radii $\tau$ and $\sigma$ respectively. We consider $\varphi$ such that
$0\leq\varphi\leq 1, \varphi\equiv 1$ in $B_{\tau}$, $\varphi\equiv 0$ in $\Omega \setminus B_{\sigma}$ and $|\nabla\varphi|\leq \frac{2}{{\sigma-\tau}}$. We put $\Omega_{k,\sigma}=\Omega_k\cap B_{\sigma}$.
By \eqref{I_finale_L}, taking $q> \max_{i} p_i$ and using H\"{o}lder inequality we get
\begin{equation*}
\begin{split}
\sum_{i=1}^N\int_{\Omega_{k,\tau}} \left|\frac{\partial}{\partial x_i}v\right|^{p_i} dx
\leq &
C\sum_{i=1}^N\frac{1}{(\sigma-\tau)^{p_i}}\int_{\Omega_{k,\sigma}}  (|u|-k)^{p_i} dx+C\sum_{i=1}^N \|\mathcal F_i\|^{p_i}_{L^{\mu p_i}(\Omega_{k,\sigma})} |\Omega_{k,\sigma}|^{1-\frac{1}{\mu}}.
\end{split}
\end{equation*}
We stress that since $\mathcal F_i\in L^{r_i}_{loc}(\Omega)$, then $\mathcal F_i\in L_{loc}^{p_i\mu}(\Om)$ for every $i=1,...,N$.
Moreover there exists $\widehat{k}>0$ such that $|\Omega_{k,\sigma}|<1$ for $k\geq\widehat{k}$,  and then
$$
|\Omega_{k,\sigma}|^{1-\frac{1}{\mu}}
\leq
|\Omega_{k,\sigma}|^{1-\frac{\overline{p}}{N}}$$
under the assumption \eqref{hp-ri_boun}. Then, for $k>\widehat{k}$
\begin{equation}\label{STIMA LEMMA}
\!\!\sum_{i=1}^N\!\int_{\Omega_{k,\tau}}\!\! \left|\frac{\partial}{\partial x_i}v\right|^{p_i} \!dx
\!\leq\!
%
C\!\sum_{i=1}^N\!
\left[\!
{(\sigma-\tau)^{-p_i}}\!\!\int_{\Omega_{k,\sigma}}\!\!\! \!\!\! (|u|-k)^{p_i} dx
\!+\! \sigma^{-\delta N}|\Omega_{k,\sigma}|^{1-\frac{\overline{p}}{N}+\delta}
\|\mathcal F_i\|^{p_i}_{L^{\mu p_i}(\Omega_{k,\sigma})}
\right],
\end{equation}
where $\delta$ is a positive constant that we choose later. Inequality \eqref{STIMA LEMMA} is the key ingredient to conclude that we can choose $k_0>0$ such that
\begin{equation}\label{claim}
\int_{\Omega_{2k_0,\sigma-\sigma_0}} (u-2k_0)\, dx =0,
\end{equation}
for  a fixed $0<\sigma_0<\sigma$,
\textit{i.e.} $u$ is locally bounded when the norms of $b_i$ are small enough. The claim \eqref{claim} will be proved in the next step following the idea of Lemma 5.4 of \cite{Lady}.

\textit{Step 2.  Proof of \eqref{claim}.}

Let us consider the sequence of concentric balls $B_{\rho_h}$, with
$\rho_h=\sigma-\sigma_0
+\frac{\sigma_0}{2^h}$
and the sequence
$k_h=2k_0-\frac{k_0}{2^h}$,
where $k_0>0$ will be fixed later. We will denote for $h=0,1,...$
\begin{equation*}\label{Jh}
J_h^i=\int_{\Omega_{k_h,\rho_h}}(|u|-k_h)^{p_i}\, dx \quad \text{and}\quad \zeta_h(x)=\zeta\left(2^{h+1}(|x|-\sigma+\sigma_0)\right),
\end{equation*}
where $\zeta(s)$ is a continuously differentiable nonincreasing function on $\mathbb{R}$ that is equal to 1 for $s\leq\sigma_0$ and equal to $0$ for $s\geq\frac{3}{2}\sigma_0$.
We stress that $\zeta_n\equiv 1$ inside the ball $B_{\rho_{h+1}}$ and $\zeta_n\equiv 0$ outside the ball $B_{\overline{\rho}_h}$, where $\overline{\rho}_h=\frac{1}{2}(\rho_{h+1}+\rho_{h})$.
We have
$$J_{h+1}^i\leq \int_{\Omega_{k_{h+1},\overline{\rho}_h}}
(|u|-k_{h+1})^{p_i}\zeta_h^{p_i}\, dx.$$
Moreover by Poincar\'{e} inequality obtained by Sobolev and H\"{o}lder inequalities we get
\begin{equation}\label{5.14}
\begin{split}
J_{h+1}^i&\leq C
|\Omega_{k_{h+1},\overline{\rho}_h}|
^{1-\frac{p_i}{\overline{p}}+\frac{p_i}{N}} \left(\prod_{j=1}^N
\left\{\int_{\Omega_{k_{h+1},\overline{\rho}_h}}
\left(\frac{\partial}{\partial x_j}
[(|u|-k_{h+1})\zeta_h]\right)^{p_j}\right\}^{\frac{1}{p_jN}}\right)^{p_i}
\\
&\leq C
|\Omega_{k_{h+1},\overline{\rho}_h}|
^{1-\frac{p_i}{\overline{p}}+\frac{p_i}{N}} \left(\prod_{j=1}^N
\left[\int_{\Omega_{k_{h+1},\overline{\rho}_h}}
\left|\frac{\partial}{\partial x_j}u
\right|^{p_j} \, dx
+
\nu^j 2^{p_j h}J_h^j
\right]^{\frac{1}{p_jN}}\right)^{p_i},
\end{split}
\end{equation}
where $C=C(\alpha,N,\vec{p}, \vec{\beta})$ and
$$\nu^i=\max_{s\in [\sigma_0,\frac{3}{2} \sigma_0]}[\zeta'(s)]^{p_i}.$$
Now we write \eqref{STIMA LEMMA} with $k=k_{h+1}$ and $\sigma=\rho_h$ and $\tau=\overline{\rho}_h$:

\begin{align}\label{STIMA LEMMA 2}
&\sum_{i=1}^N\int_{\Omega_{k_{h+1},\overline{\rho}_h}} \left|\frac{\partial}{\partial x_i}v\right|^{p_i} dx\\
&\leq
%
C\left[\sum_{i=1}^N\frac{1}{(\rho_h-\overline{\rho}_h)^{p_i}}
\int_{\Omega_{k_{h+1},\rho_h}}  (|u|-k_{h+1})^{p_i} dx+
\rho_h^{-\delta N}|\Omega_{k_{h+1},\rho_h}|^{1-\frac{\overline{p}}{N}+\delta}\sum_{i=1}^N \|\mathcal F_i\|^{p_i}_{L^{\mu p_i}}\right]
\nonumber \\
&\leq C\left(  \rho_h^{-\delta N}|\Omega_{k_{h+1},\rho_h}|^{1-\frac{\overline{p}}{N}+\delta}+\sum_{i=1}^N
2^{(h+3)p_i}
J_h^i\right),\nonumber
\end{align}
where $C=C(\alpha, \|\mathcal F_i\|_{L^{\mu p_i}},N,\vec{p}, \vec{\beta})$. Let us find a bound for the measure of the set $\Omega_{k_{h+1},\rho_h}$. We have
\begin{equation}\label{S1}
J^i_h\geq\int_{\Omega_{k_{h+1},\rho_h}}(|u|-k_h)^{p_i}\geq
(k_{h+1}-k_{h})^{p_i}|\Omega_{k_{h+1},\rho_h}|=2^{-(h+1)p_i}k_0^{p_i}|\Omega_{k_{h+1},\rho_h}|.
\end{equation}

Putting together \eqref{S1}, \eqref{STIMA LEMMA 2}, inequality \eqref{5.14} can be rewrite as
\begin{equation*}
\begin{split}
J_{h+1}^i&\leq C
|\Omega_{k_{h+1},\overline{\rho}_h}|
^{1-\frac{p_i}{\overline{p}}+\frac{p_i}{N}}
\left\{
\prod_{j=1}^N\left[\sum_{m=1}^N\int_{\Omega_{k_{h+1},\overline{\rho}_h}}
\left|\frac{\partial}{\partial x_m}u
\right|^{p_m} \, dx
+
\sum_{m=1}^N\nu^m 2^{p_m h}J_h^m
\right]^{\frac{1}{p_jN}}
\right\}^{p_i}
\\
&
\leq C
\left(2^{(h+1)p_i}k_0^{-p_i}J_h^i\right)
^{1-\frac{p_i}{\overline{p}}+\frac{p_i}{N}}
\left[\sum_{m=1}^N\int_{\Omega_{k_{h+1},\overline{\rho}_h}}
\left|\frac{\partial}{\partial x_m}u
\right|^{p_m} \, dx
+
\sum_{m=1}^N\nu^m 2^{p_m h}J_h^m
\right]^{\frac{p_i}{\overline{p}}}
\\
& \leq C
\left(2^{(h+1)p_i}k_0^{-p_i}J_h^i\right)
^{1-\frac{p_i}{\overline{p}}+\frac{p_i}{N}}
\left[\sum_{m=1}^N\left(
2^{(h+3)p_m}
J_h^m
+ \nu^m 2^{p_m h}J_h^m\right)+
|\Omega_{k_{h+1},\rho_h}|^{1-\frac{\overline{p}}{N}+\delta}
\right]^{\frac{p_i}{\overline{p}}}
\\
& \leq C
\left(2^{(h+1)p_i}k_0^{-p_i}J_h^i\right)
^{1-\frac{p_i}{\overline{p}}+\frac{p_i}{N}}
\left[
\sum_{m=1}^N \left(
2^{(h+3)p_m}
J_h^m
+\nu^m 2^{p_m h}J_h^m\right)+ \left(2^{(h+1)p_i}k_0^{-p_i}J_h^i\right)^{1-\frac{\overline{p}}{N}+\delta}
\right]^{\frac{p_i}{\overline{p}}}
\\
& \leq C 2^{hp_{\max}}
k_0^{-p_{\min}+\frac{p^2_{\max}}{\overline{p}^*}}
\left[
 2^{h \frac{{p_{\max}}^2}{\overline{p}}}
(J_h^i)^{1+\frac{p_i}{N}-\frac{p_i}{\overline{p}}}
\left(\sum_{m=1}^N
J_h^m\right)^{\frac{p_i}{\overline{p}}}
+ 2^{h(1+\delta)\frac{{p_{\max}}^2}{\overline{p}}}k_0^{\frac{p_{\max}^2}{N}-\frac{p_{\min}^2}{\overline{p}}(1+\delta)}(J_h^i)^{1+\delta\frac{p_i}{\overline{p}}}\right.
\\
&+\left.
2^{h\frac{{p_{\max}}^2}{\overline{p}}}(\max_{m}\nu^m)^{\frac{p_i}{\overline{p}}}
(J_h^i)^{1+\frac{p_i}{N}-\frac{p_i}{\overline{p}}}
\left(\sum_{m=1}^N J_h^m\right)^{\frac{p_i}{\overline{p}}}
\right],
\end{split}
\end{equation*}
where $p_{\min}= \min_{i}\{p_1,...,p_N\}$. Since $\overline{p}^{*}>p_{\max}$ the exponent $1+\frac{p_i}{N}-\frac{p_i}{\overline{p}}>0$
and setting $Y_{h}=\sum_{m=1}^N J_h^m$, one can rewrite the previous inequality as follows
\begin{align}\label{5.16}
J_{h+1}^i\leq&
C 2^{hp_{\max}}
k_0^{-p_{\min}+\frac{p^2_{\max}}{\overline{p}^*}}
(Y_h)^{1+\delta\frac{p_i}{\overline{p}}}
\left[
2^{h \frac{{p_{\max}}^2}{\overline{p}}}
\left(Y_h\right)^{{\frac{p_i}{N}-\delta\frac{p_i}{\overline{p}}}}+
 2^{h(1+\delta)\frac{{p_{\max}}^2}{\overline{p}}}k_0^{\frac{p^2_{\max}}{N}-\frac{p^2_{\min}}{\overline{p}}(1+\delta)}
\right.
\nonumber \\
&+\left.
2^{h\frac{{p_{\max}}^2}{\overline{p}}}\max_{i}((\max_{m}\nu^m)^{\frac{p_i}{\overline{p}}})
\left(Y_h\right)^{{\frac{p_i}{N}-\delta\frac{p_i}{\overline{p}}}}
\right]
\\
\leq&
C 2^{h(p_{\max}+\frac{p_{\max}^2}{\overline{p}})}
k_0^{-p_{\min}+\frac{p_{\max}}{\overline{p}^*}}
(Y_h)^{1+\delta\frac{p_i}{\overline{p}}}
\left[ (Y_0)^{\frac{p_i}{N}-\delta\frac{p_i}{\overline{p}}}+1
\right],\nonumber
\end{align}
where the last inequality follows taking $\delta<\frac{\overline{p}}{N}$ and observing that $J_h^m$ are decreasing. Putting $\delta'=\frac{\delta p_i}{\overline{p}}$ and summarizing left and right side of \eqref{5.16}, we get
$$
Y_{h+1}\leq
NC 2^{h(p_{\max}+\frac{p_{\max}^2}{\overline{p}})}
k_0^{-p_{\min}+\frac{p^2_{\max}}{\overline{p}^*}}
(Y_h)^{1+\delta'}
\max_i\left[ (Y_0)^{\frac{p_i}{N}-\delta'}+1
\right].$$
Denoting $C_1=NC \max_i\left[ (Y_0)^{\frac{p_i}{N}-\delta'}+1
\right]$, $\omega=p_{\min}-\frac{p^2_{\max}}{\overline{p}^*}$ and $b= 2^{(p_{\max}+\frac{p_{\max}^2}{\overline{p}})}$ the previous inequality became
$$Y_{h+1}\leq
C_1 b^h
k_0^{-\omega}
(Y_h)^{1+\delta'}.$$
Choosing $k_0$ such that
$k_0=\max\{\widehat{k},1,C_1^{1/\omega}b^{1/[\omega\delta'(1+\delta')]}a^{\delta'/\omega}\}$,
we obtain
$$Y_1\leq C_1 k_0^{-\omega}Y_0^{1+\delta'}\leq k_0^{\omega/\delta'}C_1^{-1/\delta'}b^{-1/(\delta')^2}.$$
Now we are in position to apply Lemma 4.7 of \cite{Lady} in order to obtain \eqref{claim}.

\textit{Step 3. Dropping the assumption  that $\mathcal B_i\equiv  0$.}

In the general case where $b_i\in L^{\frac{r_i}{p_i-1}}_{loc}(\Om)$, with $r_1,\cdots,r_N$ satisfying \eqref{hp-ri_boun},
%
 we  rewrite equation \eqref{sol1} as

  \begin{equation}\label{solthetabis2}
\Si \int_\Om   \left (\mathcal{A}_i(x,\nabla u) \right) \dei \varphi \, dx = \Si \int_\Om  ( \mathcal |F_i|^{p_i-2} \mathcal F_i - \mathcal B_i(x,u) )\, \dei \varphi \,dx.
\end{equation}

Then, we can apply the result obtained in the previous steps to  \eqref{solthetabis2}. In fact, by \eqref{hp-ri_boun} and  by Theorem \ref{mainTHM},
we have that  $u\in L^q_{loc}(\Om)$ for every $q <+\infty$. Hence we can find  $p_i<s_i<r_i$ such that $\min_{i}\left\{ \frac{s_i}{p_i}\right\}>\frac{N}{\bar p}$
and
\begin{equation*}\label{finalboot2}
G_i(x,u):=[  |\mathcal F_i|^{p_i-2} \mathcal F_i(x) - \mathcal B_i(x,u)] \in L^{\frac {s_i}{p_i-1}}_{loc} (\Om),
\mbox{ for every } i=1,....,N.
\end{equation*}
$\blacksquare$

\begin{remark}\label{remarkFINE}
If $\Omega$ is a bounded open set with Lipschitz boundary and we consider homogeneous Dirichlet problems we can argue as in Theorem \ref{Th2} to obtain  the boundedness of solutions. Precisely, instead of \eqref{I_finale_L}, we obtain
\begin{equation*}
\begin{split}
\sum_{i=1}^N\int_{\Omega_k} \left|\frac{\partial}{\partial x_i}v\right|^{p_i} dx
\leq C \sum_{i=1}^N\int_{\Omega_k} |\mathcal F_i|^{p_i} dx,
\end{split}
\end{equation*}
where $C=C(\alpha, \vec p, \vec \beta,N) >0$. At this point, we can proceed
as in Theorem 2 in \cite{Str} and one can conclude the proof using the Stampacchia's Lemma (see \cite{S}) instead of Step 2 of Theorem \ref{Th2}.
\end{remark}
\section{Appendix}

In this appendix we prove a technical lemma (see also \cite{FVV}).

\begin{lemma}\label{L som}
Let $R>0,\alpha\geq0, \beta\geq0$ and $\theta_i>0$ for all $i=1,\cdots,N$ and $\Omega=\{\sum_{i=1}^N|x_i|^{\theta_i}<R\}$. If
\begin{equation}\label{J1}
\sum_{i=1}^N\frac{1}{\theta_i}+\frac{\beta}{\theta_j}>\alpha,
\end{equation}
then $\left(\sum_{i=1}^N|x_i|^{\theta_i}\right)^{-\alpha}|x_j|^\beta\in L^1(\Omega)$.
Otherwise
$\left(\sum_{i=1}^N|x_i|^{\theta_i}\right)^{-\alpha} \not \in L^1(\Omega)$.
\end{lemma}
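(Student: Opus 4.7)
\textbf{Proof plan for Lemma \ref{L som}.}

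The plan is to reduce the anisotropic problem to a radial one via a change of variables that straightens the anisotropic ball, then separate radial and angular integrations using a polar-type decomposition on the resulting $\ell^1$-ball.

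First I would exploit the even symmetry of the integrand to restrict to the positive orthant $\{x_i>0\}$, and then perform the substitution
\begin{equation*}
y_i=x_i^{\theta_i},\qquad x_i=y_i^{1/\theta_i},\qquad dx_i=\tfrac{1}{\theta_i}\,y_i^{1/\theta_i-1}\,dy_i,
\end{equation*}
which maps the positive part of $\Omega$ diffeomorphically onto the standard simplex $\Delta_R=\{y_i>0:\sum_i y_i<R\}$. The integrand transforms into
\begin{equation*}
\Bigl(\sum_i y_i\Bigr)^{-\alpha} y_j^{\beta/\theta_j}\,\prod_{i=1}^N \tfrac{1}{\theta_i}\,y_i^{1/\theta_i-1}.
\end{equation*}

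Next I would introduce polar coordinates adapted to the $\ell^1$-norm: write $y=\rho z$ with $\rho\in(0,R)$ and $z$ in the open unit simplex $\Sigma=\{z_i>0:\sum_i z_i=1\}$, so that $dy=\rho^{N-1}\,d\rho\,d\sigma(z)$ (via the change of variables $(\rho,z_1,\dots,z_{N-1})\mapsto y$ with $z_N=1-\sum_{i<N}z_i$). Because $\sum_i y_i=\rho$ and $y_i=\rho z_i$, the integral becomes, up to a positive constant,
\begin{equation*}
\int_0^R \rho^{-\alpha+\beta/\theta_j+\sum_i 1/\theta_i-1}\,d\rho \;\cdot\;\int_{\Sigma} z_j^{(\beta+1)/\theta_j-1}\prod_{i\neq j}z_i^{1/\theta_i-1}\,d\sigma(z).
\end{equation*}

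The angular integral on $\Sigma$ is a classical Dirichlet integral; since $\theta_i>0$ and $\beta\ge 0$, all exponents $(\beta+1)/\theta_j>0$ and $1/\theta_i>0$ are strictly positive, so this factor is finite (and strictly positive). Therefore the full integral is finite if and only if the radial exponent satisfies $-\alpha+\beta/\theta_j+\sum_i 1/\theta_i-1>-1$, which is exactly condition \eqref{J1}. Specializing to $\beta=0$ immediately gives the converse: when $\sum_i 1/\theta_i\le\alpha$ the radial integral diverges at $\rho=0$, so $(\sum_i|x_i|^{\theta_i})^{-\alpha}\notin L^1(\Omega)$.

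The only mildly delicate step is the justification of the polar decomposition on the simplex and identification of the $\rho$-Jacobian as $\rho^{N-1}$; everything else is routine. No smallness of $R$ is needed since the domain is bounded and the only integrability question is at the origin $\rho=0$.
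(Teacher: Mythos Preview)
Your argument is correct. The substitution $y_i=x_i^{\theta_i}$ on the positive orthant followed by the cone decomposition $y=\rho z$ over the unit simplex produces an exact factorization into a radial integral $\int_0^R \rho^{-\alpha+\beta/\theta_j+\sum_i 1/\theta_i-1}\,d\rho$ and a Dirichlet integral on $\Sigma$ whose exponents $(\beta+1)/\theta_j$ and $1/\theta_i$ are all strictly positive, hence finite; the Jacobian $\rho^{N-1}$ is the standard one for this decomposition. Convergence at $\rho=0$ is therefore equivalent to \eqref{J1}, and the $\beta=0$ case gives the non-integrability statement.

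The paper takes a different route. Instead of mapping to the simplex, it uses the substitution $x_i=|y_i|^{\theta/\theta_i}\operatorname{sign}y_i$ with $\theta=\max_i\theta_i$ for the integrability direction and $\theta=\min_i\theta_i$ for the divergence direction, so that $\sum_i|x_i|^{\theta_i}=\sum_i|y_i|^{\theta}$ becomes comparable to $|y|^{\theta}$; the mixed factor $\prod_i|y_i|^{\theta/\theta_i-1}$ is then crudely bounded above (respectively below) by a power of $|y|$, reducing to a purely radial integral in ordinary spherical coordinates. This avoids the Dirichlet integral entirely, at the cost of running two separate arguments and working with inequalities rather than an identity. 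Your approach is more economical: a single change of variables yields an equality that simultaneously proves sufficiency and necessity, and makes the role of the threshold $\sum_i 1/\theta_i+\beta/\theta_j=\alpha$ transparent.
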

\begin{proof}
Putting $\theta=\max \theta_i$ and $x_i=|y_i|^{\frac{\theta}{\theta_i}}\text{sign} y_i$ we have
\begin{equation*}
\begin{split}
\int_{\{\sum_{i=1}^N|x_i|^{\theta_i}<R\}}\left(\sum_{i=1}^N|x_i|^{\theta_i}\right)^{-\alpha}|x_j|^\beta\, dx&=
\int_{\{\sum_{i=1}^N|y_i|^{\theta}<R\}}\left(\sum_{i=1}^N|y_i|^{\theta}\right)^{-\alpha}
|y_j|^{\frac{\theta}{\theta_j}\beta}
\prod_{i=1}^N\frac{\theta}{\theta_i}|y_i|^{\frac{\theta}{\theta_i}-1}
\, dy\\
&\leq
C_1\int_{\{\sum_{i=1}^N|y_i|^{\theta}<R\}}|y|^{-\theta\alpha+\frac{\theta}{\theta_j}\beta+\sum_{i=1}^N\frac{\theta}{\theta_i}-N}
\, dy\\
&\leq
C_1\int_{\{|y|^{\theta}<C_2R\}}|y|^{-\theta\alpha+\frac{\theta}{\theta_j}\beta+\sum_{i=1}^N\frac{\theta}{\theta_i}-N}
\, dy
\end{split}
\end{equation*}
for suitable positive constants $C_1,C_2$. The last integral is finite if \eqref{J1} holds. Otherwise putting $\theta=\min \theta_i$ and $x_i=|y_i|^{\frac{\theta}{\theta_i}}\text{sign} y_i$ we have
\begin{equation*}
\begin{split}
\int_{\{\sum_{i=1}^N|x_i|^{\theta_i}<R\}}\left(\sum_{i=1}^N|x_i|^{\theta_i}\right)^{-\alpha}|x_j|^{\beta}\, dx&=
\int_{\{\sum_{i=1}^N|y_i|^{\theta}<R\}}\left(\sum_{i=1}^N|y_i|^{\theta}\right)^{-\alpha}|y_j|^{\frac{\theta}{\theta_j}\beta}
\prod_{i=1}^N\frac{\theta}{\theta_i}|y_i|^{\frac{\theta}{\theta_i}-1}
\, dy
\\
\geq
&C_3 \int_{\{|y|^{\theta}<R/N\}}|y|^{-\theta\alpha+\sum_{i=1}^N\frac{\theta}{\theta_i}-N}
|y_j|^{\frac{\theta}{\theta_j}\beta}\, dy
\end{split}
\end{equation*}
where $ C_3$ is a positive constant. Now we use spherical coordinates, in which the coordinates consist of a radial coordinate $\rho$ and $N-1$ angular coordinates $\psi_1,\psi_2,\cdots,\psi_{N-1}$, where the angles $\psi_1,\psi_2,\cdots,\psi_{N-2}$ range over $[0,\pi]$ and $\psi_{N-1}$ ranges over $[0,2\pi)$. For example $y_1$ coordinate becomes $y_1= \rho \cos \psi_1$ and then
\begin{equation*}
\begin{split}
\int_{\{|y|^{\theta}<R/N\}}&|y|^{-\theta\alpha+\sum_{i=1}^N\frac{\theta}{\theta_i}-N}
|y_1|^{\frac{\theta}{\theta_1}\beta}\, dy
\\
& =\int_0^{2\pi}\int_{0}^{\frac{R}{N}} \rho^{-\theta\alpha+\sum_{i=1}^N\frac{\theta}{\theta_i}-N}|\rho \cos \psi_1|^{\frac{\theta}{\theta_1}\beta}
\rho^{N-1}    \prod_{i=1}^{N-2} \sin ^{N-1-i}\psi_{i} \,d\rho\,d\psi_{1}\cdots d\psi_{N-1}\\
&
=C(N) \int_{0}^{\frac{R}{N}} \rho^{-\theta\alpha+\sum_{i=1}^N\frac{\theta}{\theta_i}-N+\frac{\theta}{\theta_1}\beta +N-1}\,d\rho.
\end{split}
\end{equation*}
The last integral is finite if 
\eqref{J1} does not hold and for $j=2,\cdots,N$ the proof runs similarly.
\end{proof}

\section*{Acknowledgements}

The authors are members of Gruppo Nazionale per l'Analisi Matematica, la Probabilit\`a e le loro Applicazioni (GNAMPA) of the Istituto Nazionale di  Alta Matematica (INdAM). 
Research partially supported by project    Vain-Hopes within the program VALERE: VAnviteLli  pEr la RicErca.


\begin{thebibliography}{99}


\bibitem{AdBF1} Alberico A., di Blasio G., Feo F., Estimates for fully anisotropic elliptic equations with a zero
order term, Nonlinear Analysis 181 (2019), 249-264.

\bibitem{AdBF2} Alberico A., di Blasio G., Feo F., An eigenvalue problem for the anisotropic $\Phi$-Laplacian, J. Differential Equations 269 (2020), 4853-4883.

\bibitem{AC} Antontsew S., Chipot M., Anisotropic Equations: Uniqueness and existence results,  Diff. Int. Eq. 21 (2008), 401-419.

\bibitem{BK} Bendahmane M.,  Karlsen K. H, Nonlinear anisotropic elliptic and parabolic equations in $\mathbb R^N$
with advection and lower order terms and locally integrable data, Potential Anal. 22 (2005),
207--227.


\bibitem{B} Boccardo L., Finite energy solutions of nonlinear Dirichlet problems with discontinuous coefficients, Boll. Un. Mat. It. 5 (2012), 357-368.


\bibitem{Bocc1} Boccardo L., Dirichlet problems with singular convection terms and applications, J. Differential Equations, 258 (2015), 2290--2314.

\bibitem{BGM} Boccardo L., Gallouet T., Marcellini P., Anisotropic Equations in $L^1$, Diff. Int. Eq. 9 (1996), 209-212.



\bibitem{BMS}  Boccardo L., Marcellini P., Sbordone C., $L^\infty-$ regularity for variational problems with sharp non-standard growth conditions, Boll. Un. Mat. Ital. A, 4 (1990), 219-225.


\bibitem{BB} Bousquet P., Brasco L., Lipschitz regularity for orthotropic functionals with nonstandard growth conditions,  Rev. Mat. Iberoam. 36 (2020), 1989–2032.

\bibitem{BaCri} Brandolini B.,  C\^{\i}rstea F. C.,  Anisotropic elliptic equations with gradient-dependent lower order terms and $L^1$ data, (2020)	arXiv:2001.02754.



\bibitem{CS} Carozza M., Sbordone C., The distance to $L^\infty$ in some function spaces and applications,
Diff. Int. Equ. 10 (1997), 599-607.


\bibitem{C1} Cianchi  A., Local boundedness of minimizers of anisotropic functionals, Ann. Inst. Henri Poincar\'{e}, Anal. Non Lin\'{e}aire 17 (2000), 147-168.

\bibitem{C2} Cianchi  A., Symmetrization in anisotropic elliptic problems, Comm. Partial Differential Equations 32 (2007), 693-717.

\bibitem{CMM} Cupini G., Marcellini P., Mascolo E., Regularity under sharp anisotopic general growth conditions,
Discrete Contin. Dyn. Syst. Ser. B 11 (2009), 67-86.


\bibitem{Dic1} Di Castro  A., Existence and regularity results for anisotropic elliptic problems, Adv.Nonlinear Stud. 9 (2009), 367-393.

\bibitem{FGMZ} Farroni F., Greco L., Moscariello G., Zecca G.,  {Noncoercive quasilinear elliptic operators with singular lower order terms}, Calc. Var. and Partial Differential Equations (2021), DOI: 10.1007/s00526-021-1965-z.


\bibitem{FVV} Feo F., V\`azquez J. L., Volzone B., Anisotropic Fast Diffusion Equations, arXiv:2007.00122.

\bibitem{Fragala} Fragal\`{a} I., Gazzola  F., Kawohl B., Existence and nonexistence results
for anisotropic quasilinear elliptic equations, Ann. I. H. Poincar\'{e} 21 (2004), 715-734.

\bibitem{FGL} Fragal\`{a} I., Gazzola F., Lieberman G., Regularity and nonexistence results for anisotropic quasilinear elliptic equations in convex domains, Disc. Cont. Dyn. Syst. (2005), 280-286.

\bibitem{G} Gagliardo E., Propriet\`{a} di alcune classi di funzioni in pi\`{u} variabili, Matematika 5 (1961), 87-116.


\bibitem{GGM} Giannetti F., Greco L., Moscariello G.,
{Linear elliptic equations with lower order terms}, Diff.  Int. Equ. 26 (2013), 623--638.


\bibitem{GMZ} Greco L.,  Moscariello G., Zecca G., Regularity for solutions to nonlinear elliptic equations, Diff.  Int. Equ. 26 (2013), 1105-1113.





\bibitem{Lady}  Ladyzhenskaya O. A., Linear and Quasilinear Elliptic Equations, Academic Press (1968).

\bibitem{LI} Leonetti F., Innamorati A., Global integrability for weak solutions to some anisotropic
elliptic equations, Nonlinear Analysis 113 (2015), 430-434.


 \bibitem{Mosc}  Moscariello G., Regularity results for quasi minima of functionals with nopolynomial growth, J. Math. Anal. Appl. 168 (1992), 500--512



\bibitem{KPS}  Krein S. G., Petunin  Yu. I., Semenov E. M., Interpolation
of linear operators, Transl. Math. Monogr. Amer. Math, Soc. 54 (1982).

\bibitem{PKOS}  Pick L., Kufner A., John O., Fucik S., Function spaces, Walter de Gruyter, Berlin - New York (2013).
	
\bibitem{S} Stampacchia G., Le probl\`eme de Dirichlet pour les \'equations elliptiques du second ordre \`a coefficients discontinus, Annales de l'Institut Fourier 15 (1965), 189-257.

\bibitem{Str} Stroffolini B., Global boundedness of solutions of anisotropic variational problems,   Boll. Un. Mat. Ital. A 5 (1991), 345-352.

\bibitem{Ta}  Tartar L., Imbedding theorems of Sobolev spaces into
Lorentz spaces, BUMI Serie 8  (1998), 479-500.

\bibitem{Tr} Troisi  M., Teoremi di inclusione per spazi di
Sobolev non isotropi, Ricerche Mat. 18 (1969), 3-24.

\end{thebibliography}
\end{document}